\newcommand{\bfx}{\mathbf x}
\newcommand{\rs}{\upharpoonright}
\newcommand{\ubM}{M_{\leq 1}}
\newcommand{\ub}[1]{(#1)_{\leq 1}}
\newcommand{\bbN}{{\mathbb N}}
\newcommand{\bbR}{{\mathbb R}}
\newcommand{\bbT}{{\mathbb T}}
\newcommand{\bbC}{\mathbb C}
\newcommand{\cZ}{{\mathcal Z}}
\newcommand{\cB}{\mathcal B}
\newcommand{\e}{\varepsilon}
\newtheorem{thm}{Theorem}[section]
\newtheorem{theorem}[thm]{Theorem}
\newtheorem{corollary}[thm]{Corollary}
\newtheorem{lemma}[thm]{Lemma}
\newtheorem{proposition}[thm]{Proposition}
\newtheorem{prop}[thm]{Proposition}
\theoremstyle{definition}
\newtheorem{definition}[thm]{Definition}
\newtheorem{properties}[thm]{Properties}
\newtheorem*{convention}{Convention}
\newcounter{my_enumerate_counter}
\newcommand{\pushcounter}{\setcounter{my_enumerate_counter}{\value{enumi}}}
\newcommand{\popcounter}{\setcounter{enumi}{\value{my_enumerate_counter}}}
 \DeclareMathOperator{\supp}{supp}
\newcommand{\cU}{{\mathcal U}}
\newcommand{\cV}{\mathcal V}
\newcommand{\bfa}{\mathbf a}
\newcommand{\bfb}{\mathbf b}
\newcommand{\bfc}{\mathbf c}
\newcommand{\NneN}{\bbN^{\nearrow\bbN}}
\newcommand{\fc}{\mathfrak c}
\def\CU{{\mathcal U}}
\def\CA{{\mathcal A}}
\title{Model theory of operator algebras I: Stability}
\author{Ilijas Farah}
\address{Department of Mathematics and Statistics\\
York University\\
4700 Keele Street\\
North York, Ontario\\ Canada, M3J
1P3\\
and Matematicki Institut, Kneza Mihaila 34, Belgrade, Serbia}
\email{ifarah@mathstat.yorku.ca}
\urladdr{http://www.math.yorku.ca/$\sim$ifarah}
\author{Bradd Hart}
\address{Dept. of Mathematics and Statistics\\
McMaster University\\ 1280 Main Street\\ West Hamilton, Ontario\\
Canada L8S 4K1}
\email{hartb@mcmaster.ca}
\urladdr{http://www.math.mcmaster.ca/$\sim$bradd/}
\thanks{The first two authors are partially supported by NSERC. \\ AMS subject codes: 03C45, 03C98, 46L05, 46L10, 46M07}
\author{David Sherman}
\address{Department of Mathematics\\
University of Virginia\\
P. O. Box 400137\\
Charlottesville, VA 22904-4137} \email{dsherman@virginia.edu}
\urladdr{http://people.virginia.edu/$\sim$des5e/}
\begin{document}

\begin{abstract}
Several authors have considered whether the ultrapower and the
relative commutant of a C*-algebra or II$_1$ factor depend on the
choice of the ultrafilter. We settle each of these questions,
extending results of Ge--Hadwin and the first author.
\end{abstract}

\maketitle


\section{Introduction}

Suppose that $A$ is a separable object of some kind: a C*-algebra,
II$_1$-factor or a metric group.  In various places, it is asked
whether all ultrapowers of $A$ associated with nonprincipal
ultrafilters on $\bbN$, or all relative commutants of $A$ in an
ultrapower, are isomorphic. For instance, McDuff (\cite[Question (i)
on p. 460]{McDuff:Central}) asked whether all relative commutants of
a fixed separable II$_1$ factor in its ultrapowers associated with
nonprincipal ultrafilters on~$\bbN$ are isomorphic.  An analogous
question for relative commutants of separable C*-algebras was asked
by Kirchberg and answered in \cite{Fa:Relative} for certain
C*-algebras. (The argument in \cite{Fa:Relative} does not cover all
cases of real rank zero as stated.) As a partial answer to both
questions, Ge and Hadwin (\cite{GeHa}) proved that the Continuum
Hypothesis implies a positive answer. They also proved that if the
Continuum Hypothesis fails then some C*-algebras have nonisomorphic
ultrapowers associated with nonprincipal
 ultrafilters on $\bbN$.
We give complete answers to all these questions in Theorems
\ref{T.type-II-1}, \ref{T.rel.comm} and \ref{T.C*}.  

During the December 2008 Canadian Mathematical Society meeting in
Ottawa,  Sorin Popa asked the first author whether one can find
uncountably many non-isomorphic tracial ultraproducts of finite
dimensional matrix algebras
 $M_i(\bbC)$, for $i\in\bbN$. In Proposition~\ref{P.A1} we
 show the if the Continuum Hypothesis fails then there are nonisomorphic
 ultraproducts and if the continuum is sufficiently large then Popa's question has a positive answer.

These results can be contrasted with the fact that all ultrapowers
of a separable Hilbert space (or even a Hilbert space of character
density $\leq \fc=2^{\aleph_0}$) associated with nonprincipal
ultrafilters on $\bbN$ are isomorphic to $\ell^2(\fc)$. We show that
the separable tracial von Neumann algebras whose ultrapowers are all
isomorphic even when the Continuum Hypothesis fails are exactly
those of type I (Theorem \ref{T:typeI}).

We now introduce some terminology for operator algebraic ultrapowers
that we will use throughout the paper. {\bf Unless we say otherwise, all ultrafilters we use in this paper are non-principal ultrafilters on $\bbN$.}

A von Neumann algebra $M$ is \textit{tracial} if it is equipped with
a faithful normal tracial state $\tau$.  A finite factor has a
unique tracial state which is automatically normal.  The metric
induced by the $\ell^2$-norm, $\|a\|_2=\sqrt{\tau(a^*a)}$, is not
complete on $M$, but it is complete on the unit ball (in the
operator norm).  The completion of  $M$ with respect to this metric
is
 isomorphic to a Hilbert space (see e.g.,
\cite{Black:Operator}~or~\cite{Jon:von}).

The algebra of all sequences in $M$ bounded in the operator norm is
denoted by $\ell^\infty(M)$. If~$\cU$ is an ultrafilter on $\bbN$
then
\[
\textstyle c_{\cU}=\{\vec a\in \ell^\infty(M): \lim_{i\to \cU}
\|a_i\|_2=0\}
\]
is a norm-closed two-sided ideal in $\ell^\infty(M)$, and the
\emph{tracial ultrapower} $M^{\cU}$ (also denoted by $\prod_{\cU}
M$) is defined to be the quotient  $\ell^\infty(M)/c_{\cU}$.  It is
well-known that $M^{\cU}$ is tracial, and a factor if and only if
$M$ is---see e.g., \cite{Black:Operator} or \cite{Tak:TheoryIII}. In
the sequel to this paper (\cite{FaHaSh:Model2}) we shall demonstrate
that  this follows from axiomatizability in first order continuous
logic of tracial von Neumann algebras and the Fundamental Theorem of
Ultraproducts.

The elements of $M^{\cU}$ will be denoted by boldface Roman letters
such as $\bfa$, $\bfb$, $\bfc$,\dots{} and their representing
sequences in $\ell^\infty(M)$ will be denoted by $a(i)$, $b(i)$,
$c(i)$, \dots, for $i\in \bbN$, respectively.  Identifying a tracial
von Neumann algebra $M$ with its diagonal image in  $M^{\cU}$, we
will also work with the \emph{relative commutant} of $M$ in its
ultrapower,
\[
M'\cap M^{\cU}=\{\bfb: (\forall a\in M) a\bfb=\bfb a\}.
\]
A brief history of tracial ultrapowers of II$_1$ factors can be
found in the introduction to \cite{She:Notes}.

We will use several variations of the ultraproduct construction.
For C*-algebras, $c_{\cU}$ consists of sequences which go to zero in
the operator norm.  For groups with bi-invariant metric, $c_{\cU}$
is the normal subgroup of sequences whose ultralimit along $\cU$ is the
identity (\cite{Pe:Hyperlinear}).  One may also form the
\emph{ultraproduct} of a sequence of distinct algebras or groups; in
fact all of these are special cases of the ultraproduct construction
from the model theory of metric structures (see
\cite{FaHaSh:Model2},   \cite{BYBHU}, or \cite{GeHa}).


 The methods used in the present paper make no explicit use of logic, and a reader can understand all proofs while
 completely ignoring all references to logic.
However,  the intuition coming from model theory was indispensable
in discovering our results. With future applications in mind, we
shall outline the model-theoretic framework for the study of
operator algebras in \cite{FaHaSh:Model2}.

\section{The order property} \label{S.OA}

Our results  are stated and proved for tracial von Neumann algebras.
Later on we shall point out that the arguments work in more general
contexts, including those of C*-algebras and unitary groups.

\label{S.Setup} Let $M$ be a tracial von Neumann algebra  and let
$\ubM$ denote its unit ball with respect to the operator norm. For
$n\geq 1$ and a
*-polynomial $P(x_1,\dots, x_n, y_1,\dots, y_n)$ in $2n$ variables
consider the function
\[
g(\vec x,\vec y)=\|P(\vec x,\vec y)\|_2
\]
on the $2n$-th power of $\ubM$. In this section all functions of this kind range
over the unit ball of $M$. Note that with this convention we have
the following:

\begin{properties}\label{Properties.g}
\begin{enumerate}
\item [(G1)] $g$ defines a uniformly continuous function on the $2n$-th power of the
unit ball of any tracial von Neumann algebra. The uniform continuity does not depend on the particular algebra; that is, for every $\epsilon$ there is a $\delta$ independent of the choice of algebra;
\item [(G2)] For every ultrafilter $\cU$, the function $g$ can be canonically extended to
the $2n$-th power of the unit ball of the ultrapower
\end{enumerate}
\end{properties}

The discussion provided below applies verbatim to any $g$ satisfying
Properties~\ref{Properties.g}. Readers familiar with model theory will
notice that if $g$ is an interpretation in continuous logic of a
$2n$-ary formula, then Properties~\ref{Properties.g} are satisfied
(see \cite{FaHaSh:Model2} for definition of a formula and properties
of an interpretation of
  a formula). We shall furthermore suppress the mention of $n$
  whenever it is irrelevant.
\begin{convention}
In the remainder of this section we refer to $g$ and $n$ that
satisfy Properties~\ref{Properties.g} as `a $2n$-ary formula.'
\end{convention}
Each~$g$ used in our applications will be of the form $\|P(\vec
x,\vec y)\|_2$ for some *-polynomial~$P$.

For $0\leq \e<1/2$
 define the relation $\prec_{g,\e}$ on $(\ubM)^n$ by
$$
\vec x_1\prec_{g,\e} \vec x_2 \mbox{ if } g(\vec x_1,\vec x_2)\leq
\e \mbox{ and } g(\vec x_2,\vec x_1)\geq 1-\e
$$
Note that we do not require that $\prec_{g,\e}$ defines an ordering
on its domain. However, if $\vec x_i$, for $1\leq i\leq k$, are such
that $\vec x_i\prec_{g,\e} \vec x_j$ for $i<j$ then these $n$-tuples
form a linearly ordered chain of length $k$. We call such a
configuration a \emph{$g$-$\e$-chain of length $k$ in $M$}.

We write $\prec_g$ for $\prec_{g,0}$. The following is a special
case of \L os' theorem for ultraproducts in the logic of metric
structures (see \cite[Theorem~5.4]{BYBHU} or
\cite[Proposition 4.3]{FaHaSh:Model2}).

\begin{lemma}\label{L0}
For  a formula $g$, an ultrafilter $\cU$, and $\bfa$ and $\bfb$ in
$\prod_i \ub{M_i}$ the following are equivalent.
\begin{enumerate}
\item For every $\e>0$ we have $\{i: a(i)\prec_{g,\e} b(i)\}\in \cU$,
\item $\bfa\prec_g \bfb$.
\end{enumerate}
\end{lemma}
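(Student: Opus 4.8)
The plan is to unwind both sides against the definition of $\prec_{g,\e}$ and apply the metric \L os theorem to the two atomic-type conditions $g(\x_1,\x_2)\leq 0$ and $g(\x_2,\x_1)\geq 1$ separately. Recall $\bfa\prec_g\bfb$ means $g(\bfa,\bfb)=0$ and $g(\bfb,\bfa)=1$ (using $\prec_g=\prec_{g,0}$), and that by (G2) the value $g(\bfa,\bfb)$ in the ultrapower is computed as $\lim_{i\to\cU} g(a(i),b(i))$; this is exactly the content of \L os' theorem for the metric formula $g$ that we are entitled to cite from \cite[Theorem~5.4]{BYBHU}. So the two displayed equalities $g(\bfa,\bfb)=0$ and $g(\bfb,\bfa)=1$ are equivalent, respectively, to $\lim_{i\to\cU} g(a(i),b(i))=0$ and $\lim_{i\to\cU} g(b(i),a(i))=1$.

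First I would prove (2)$\Rightarrow$(1). Fix $\e>0$. From $\lim_{i\to\cU} g(a(i),b(i))=0$ we get $\{i: g(a(i),b(i))\leq\e\}\in\cU$, and from $\lim_{i\to\cU} g(b(i),a(i))=1$ we get $\{i: g(b(i),a(i))\geq 1-\e\}\in\cU$ (here I use that $g$ takes values in $[0,1]$ when evaluated on $\|P\|_2$-type formulas with $P$ suitably normalized, or more simply that the limit being $1$ forces the lower tail into $\cU$; either way this is a routine property of ultralimits). The intersection of these two sets is in $\cU$ and is contained in $\{i: a(i)\prec_{g,\e} b(i)\}$, which is therefore in $\cU$. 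Conversely, for (1)$\Rightarrow$(2): given that $\{i: a(i)\prec_{g,\e} b(i)\}\in\cU$ for every $\e>0$, fix $\e>0$ and intersect with this set to conclude $g(a(i),b(i))\leq\e$ on a $\cU$-large set, hence $\lim_{i\to\cU} g(a(i),b(i))\leq\e$; as $\e$ was arbitrary and $g\geq 0$, the ultralimit is $0$, i.e.\ $g(\bfa,\bfb)=0$. Symmetrically $g(b(i),a(i))\geq 1-\e$ on a $\cU$-large set gives $\lim_{i\to\cU} g(b(i),a(i))\geq 1-\e$ for all $\e$, hence $g(\bfb,\bfa)=1$. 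Together these say $\bfa\prec_g\bfb$.

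The only genuine content here is the appeal to the metric \L os theorem to identify $g^{M^{\cU}}(\bfa,\bfb)$ with the ultralimit of the $g^{M_i}(a(i),b(i))$; everything else is the elementary fact that an ultralimit is $\leq\e$ (resp.\ $\geq 1-\e$) iff the corresponding half-open condition holds on a $\cU$-large set, together with the observation that ``$=0$'' and ``$=1$'' are obtained by letting $\e\to 0$. The step I expect to require the most care in writing is making sure the two asymmetric inequalities in the definition of $\prec_{g,\e}$ are handled with the correct direction of approximation (upper tail for the ``$\leq\e$'' clause, lower tail for the ``$\geq 1-\e$'' clause), but there is no real obstacle — the lemma is essentially a repackaging of \L os' theorem tailored to the relation $\prec_g$, which is why it is stated as ``a special case'' of it.
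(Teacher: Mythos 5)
Your proof is correct and is exactly the argument the paper has in mind: the paper gives no proof at all, simply citing the metric \L os theorem, and your write-up is the routine unwinding of that citation (identify $g(\bfa,\bfb)$ with $\lim_{i\to\cU}g(a(i),b(i))$, then translate ``$\leq\e$ for all $\e$'' and ``$\geq 1-\e$ for all $\e$'' into the tail conditions). The only nitpick is that since $g=\|P\|_2$ need not be $[0,1]$-valued, $\prec_{g,0}$ asks for $g(\bfb,\bfa)\geq 1$ rather than $=1$; your argument delivers exactly that bound, so nothing breaks.
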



The following special case of the abstract definition of the order
property in a model
(see  \cite[Section 7]{BYU:ContStab} or \cite[Definition 5.2]{FaHaSh:Model2}) is modeled on the
order property in stability theory (\cite{She:Classification}).

\begin{definition} \label{Def.vNA.OP}
A tracial von Neumann algebra $M$ has the \emph{order property} if
there exists a formula $g$ such that for every $\e>0$, $M$ has
arbitrarily long finite $g$-$\e$-chains. If we wish to make the $g$ explicit, we say that $M$ has the order property with respect to $g$.

The following terminology is non-standard but convenient for what
follows: A sequence $M_i$, for $i\in \bbN$, of tracial von Neumann
algebras has the \emph{order property with respect to $g$} if  for every $\e>0$ and every $k\in \bbN$ all but finitely
many of the $M_i$, for $i\in \bbN$, have a $g$-$\e$-chain of length
$k$.  We say that the sequence of $M_i$'s has the order property if it has the order property with respect to some $g$.
\end{definition}

The analysis of gaps in quotient structures is behind a number of
applications of set theory to functional analysis (see e.g.,
\cite{DaWo:Introduction}, \cite{Fa:All}).
 Let $\lambda$ be a regular cardinal. An
\emph{$(\aleph_0,\lambda)$-$g$-pregap} in $M$  is a pair consisting
of a $\prec_g$-increasing family $\bfa_m$, for $m\in \bbN$, and a
$\prec_g$-decreasing family $\bfb_\gamma$, for $\gamma<\lambda$,
such that $\bfa_m\prec_g \bfb_\gamma$ for all $m$ and $\gamma$. A
$\bfc$ such that $\bfa_m\prec_g\bfc$ for all $m$ and
$\bfc\prec_g\bfb_\gamma$ for all $\gamma$ is said to \emph{fill} (or
\emph{separate}) the pregap. An $(\aleph_0,\lambda)$-pregap that is
not separated is an \emph{$(\aleph_0,\lambda)$-gap}.

Assume $M_i$, $i\in\bbN$, are tracial von Neumann algebras. Assume
$\cU$ is a nonprincipal ultrafilter on $\bbN$ such that for every
$m\geq 1$ the set of all $i$ such that $M_i$ has a $g$-$1/m$-chain
of length $m$ belongs to $\cU$. Then we can find sets $Y_m\in \cU$
such that $Y_m\supseteq Y_{m+1}$, $\bigcap_m Y_m=\emptyset$ and for
every $i\in Y_m\setminus Y_{m+1}$ there exists a $g$-$1/m$-chain
$\vec a_0(i), \vec a_1(i)$,\dots, $\vec a_{m-1}(i)$ in $M_i$.
Letting $Y_0=\bbN$ and using  $\bigcap_m Y_m=\emptyset$ for each
$i\in \bbN$ we define $m(i)$ as the unique $m$ such that $i\in
Y_m\setminus Y_{m+1}$.

 For
$h\in\bbN^{\bbN}$ define $\vec a_h$ in
 $\prod_{i\in \bbN} M_i^n$ by $\vec a_h(i)=\vec a_{h(i)}(i)$ if $h(i)\leq m(i)-1$
 and $\vec a_h(i)=\vec a_{m(i)-1}(i)$ otherwise. Then  $\bf a_h$ denotes the element of $\prod_{\cU} M_i^n$
 with the representing sequence $\vec a_h$. Write $\bar m$ for the constant function $\bar m(i)=m$.
Let  $\bfa_m$ denote~$\bfa_{\bar m}$. By $\NneN$ we denote the set
of all nondecreasing functions $f\colon \bbN\to \bbN$ such that
$\lim_n f(n)=\infty$, ordered pointwise.

\begin{lemma}\label{L2} With the notation as in the above paragraph,
assume $\bfb\in \prod_{\cU} M_i^n$ is such that $\bfa_m\prec_g \bfb$
for all $m$. Then there is $h\in \NneN$ such that  $\bfa_h\prec_g
\bfb$ and  $\bfa_m\prec_g \bfa_h$ for all $m$.
\end{lemma}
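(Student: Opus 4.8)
The plan is to push the hypothesis through \L os' theorem (Lemma~\ref{L0}), manufacture $h$ by a diagonalization, and then verify the two conclusions separately. The conclusion $\bfa_m\prec_g\bfa_h$ will turn out to hold for \emph{every} $h\in\NneN$ and to use only the chain structure built into the $\vec a_j(i)$; all the real content is in arranging $\bfa_h\prec_g\bfb$, and the diagonal construction of a suitable $h$ there is the step I expect to be delicate.

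For the first conclusion, fix $m$ and $\e>0$ and set $N=\max(m+2,\lceil 1/\e\rceil)$. If $i\in Y_N$ and $h(i)>m$ then $m(i)\geq N$, so $a_{\bar m}(i)=\vec a_m(i)$ while $a_h(i)=\vec a_\ell(i)$ with $\ell=\min(h(i),m(i)-1)$ and $m<\ell\leq m(i)-1$; since $\vec a_0(i),\dots,\vec a_{m(i)-1}(i)$ is a $g$-$1/m(i)$-chain and $1/m(i)\leq 1/N\leq\e$, this gives $a_{\bar m}(i)\prec_{g,\e}a_h(i)$. The set of such $i$ contains $Y_N\cap\{i:h(i)>m\}$, which is in $\cU$ whenever $\{i:h(i)>m\}\in\cU$ --- in particular for any $h\in\NneN$, since then $\lim_i h(i)=\infty$; so Lemma~\ref{L0} gives $\bfa_m\prec_g\bfa_h$.

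Now reduce the second conclusion. By Lemma~\ref{L0} applied to $\bfa_m\prec_g\bfb$, for all $m$ and $p$ the set $\{i:a_{\bar m}(i)\prec_{g,1/p}b(i)\}$ is in $\cU$; taking $p=m$ and intersecting with $Y_{m+1}$, on which $a_{\bar m}(i)=\vec a_m(i)$, we get
\[
Z_m:=Y_{m+1}\cap\{i:\vec a_m(i)\prec_{g,1/m}b(i)\}\in\cU ,
\]
and $i\in Z_m$ forces $m(i)\geq m+1$, hence $a_h(i)=\vec a_{h(i)}(i)$ whenever $h(i)\leq m$. It therefore suffices to produce $h\in\NneN$ with $\{i:i\in Z_{h(i)}\}\in\cU$: for such an $h$ and any $p\geq 1$, every $i$ in the $\cU$-large set $\{i:i\in Z_{h(i)}\}\cap\{i:h(i)\geq p\}$ satisfies (using $i\in Y_{h(i)+1}$, so $h(i)\leq m(i)-1$) that $a_h(i)=\vec a_{h(i)}(i)\prec_{g,1/h(i)}b(i)$, hence $a_h(i)\prec_{g,1/p}b(i)$; since $p$ was arbitrary, Lemma~\ref{L0} yields $\bfa_h\prec_g\bfb$.

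The remaining task --- choosing $h\in\NneN$ with $\{i:i\in Z_{h(i)}\}\in\cU$ --- is the heart of the matter. One way to organize it: first take the (non-monotone) $h_0$ determined by $\{i:h_0(i)\geq k\}=\bigcap_{j\leq k}Z_j$; since $\bigcap_j Z_j\subseteq\bigcap_j Y_{j+1}=\emptyset$, $h_0$ is finite-valued with all superlevel sets in $\cU$, and because $i$ with $h_0(i)=k\geq1$ lies in $Z_k$, both arguments above already apply to $h_0$ and give $\bfa_{h_0}\prec_g\bfb$ and $\bfa_m\prec_g\bfa_{h_0}$. It then remains to trade $h_0$ for a genuine element of $\NneN$: one restricts to a set $A\in\cU$ on which $h_0$ is nondecreasing and extends $h_0\rs A$ to an $h\in\NneN$, and since $A\in\cU$ the membership computations above are unaffected, so $h$ works. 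Equivalently one can build $h$ directly, putting $h\equiv m$ on $[n_m,n_{m+1})$ and choosing the breakpoints $n_m$ recursively so that the accumulated error $\bigcup_m\big([n_m,n_{m+1})\setminus Z_m\big)$ stays out of $\cU$, exploiting that the chain $Y_1\supseteq Y_2\supseteq\cdots$ with $\bigcap_m Y_m=\emptyset$ exhibits $\cU$ as countably incomplete. This extraction of a \emph{nondecreasing} diagonal function without losing membership in $\cU$ is the only point requiring more than bookkeeping, and it is where care with the ultrafilter is needed.
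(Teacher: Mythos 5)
Your core argument is the same as the paper's. Your sets $Z_m$ are (after replacing them by $\bigcap_{j\le m}Z_j$) the paper's sets $X_m$, your $h_0$ is exactly the paper's $h$ (the paper sets $h(i)=m$ for $i\in X_m\setminus X_{m+1}$, i.e.\ $h(i)=\max\{m: i\in X_m\}$), and your two \L os-type verifications that $\bfa_{h_0}\prec_g\bfb$ and $\bfa_m\prec_g\bfa_{h_0}$ are the same computations the paper performs, carried out with more care about the truncation $\vec a_h(i)=\vec a_{\min(h(i),m(i)-1)}(i)$ and about which $Y$-level guarantees which estimate. Up to and including the construction of $h_0$, your proof is correct and complete.

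The step you flag as ``the heart of the matter'' is, however, a genuine gap: neither of your two proposed ways of trading $h_0$ for an element of $\NneN$ works for an arbitrary nonprincipal ultrafilter. A set $A\in\cU$ on which $h_0$ is nondecreasing need not exist, and the breakpoint recursion cannot always be completed so that $\bigcup_m\bigl([n_m,n_{m+1})\setminus Z_m\bigr)$ stays out of $\cU$. To see why, let $L_m=\{i: h_0(i)=m\}$ and suppose each $L_m$ is infinite. The tails $\bigcup_{j\ge k}L_j$ together with the sets whose intersection with every $L_m$ is cofinite in $L_m$ have the finite intersection property, so some nonprincipal $\cU$ contains them all; for such a $\cU$ one still has $\lim_{i\to\cU}h_0(i)=\infty$, but for every nondecreasing unbounded $g$ the set $\{i: g(i)>h_0(i)\}=\bigcup_m\bigl(L_m\setminus g^{-1}([0,m])\bigr)$ is cofinite inside each $L_m$ and hence belongs to $\cU$. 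So no $g\in\NneN$ lies below $h_0$ on a set in $\cU$, and since (for the commutator formula and a suitably chosen $\bfb$) the relation $a_k(i)\prec_{g,\e}b(i)$ can hold exactly when $k\le h_0(i)$, no monotonization along these lines can succeed. You should know that the paper's own proof stops at exactly the same point: it constructs the non-monotone $h$, verifies the two order relations, and never checks $h\in\NneN$. What both your argument and the paper's actually establish is the lemma with ``$h\in\bbN^{\bbN}$ and $\lim_{i\to\cU}h(i)=\infty$'' in place of ``$h\in\NneN$'', and that weaker conclusion is the honest output of this method; obtaining a genuinely nondecreasing $h$ is not a matter of bookkeeping and appears to require either extra hypotheses on $\cU$ or a reformulation of the statement.
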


\begin{proof} For $m\in \bbN$ let
\[
X_m=\{i\in Y_m: (\forall k\leq m) g(a_k(i),b(i))\leq 1/m\text{ and }
g(b(i),a_k(i))\geq 1-1/m\}.
\]
 Clearly $X_m\in \cU$ and $\bigcap_m
X_m=\emptyset$. For $i\in X_m\setminus X_{m+1}$ let $h(i)=m$ and let
$h(i)=0$ for $i\notin X_0$. Then for  $m<m'$ and $i\in X_{m'}$ we
have $h(i)\geq m'\geq m$ hence  $g(a_m(i),a_{h(i)}(i))\leq 1/m'$ and
 $g(\bfa_m,\bfa_h)=0$. Similarly $g(\bfa_h,\bfa_m)=1$ and
therefore $\lim_{i\to \cU} h(i)=\infty$.

 Also, if $i\in X_m$ then
$g(a_{h(i)},b(i))\leq 1/m$ and therefore $g(\bfa_h,\bfb)=0$.
Similarly $g(\bfb,\bfa_h)=1$ and the conclusion follows.
\end{proof}

Following \cite{Do:Ultrapowers} (see also \cite{Fa:Relative}), for
an ultrafilter $\cU$ we write $\kappa(\cU)$ for the
\emph{coinitiality} of $\NneN/\cU$, i.e., the minimal cardinality of
$X\subseteq \NneN$ such that for every $g\in \NneN$ there is $f\in
X$ such that $\{n\colon f(n)\leq g(n)\}\in \cU$. (It is not
difficult to see that this is equal to $\kappa(\cU)$ as defined in
 \cite[Definition~1.3]{Do:Ultrapowers}.)

\begin{lemma}\label{L3} Assume $M_i$, $i\in\bbN$, is a sequence of
tracial von Neumann algebras with the order property with respect to $g$ and $\cU$ is a
nonprincipal ultrafilter on $\bbN$.
 Then $\kappa(\cU)$ is equal to the minimal cardinal $\lambda$ such that
$\prod_{\cU} M_i$ contains an $(\aleph_0,\lambda)$-$g$-gap.
\end{lemma}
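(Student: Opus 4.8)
The plan is to prove both inequalities $\lambda^{*}\le\kappa(\cU)$ and $\kappa(\cU)\le\lambda^{*}$, where $\lambda^{*}$ is the least $\lambda$ for which $\prod_{\cU}M_i$ has an $(\aleph_0,\lambda)$-$g$-gap, throughout using a dictionary between the elements $\mathbf a_h$ and the linearly ordered quotient $\NneN/\cU$. First a normalization: since the order property concerns \emph{all but finitely many} $M_i$, for each $m$ the set $\{i\colon M_i$ has a $g$-$\tfrac1m$-chain of length $m\}$ is cofinite, so in the construction preceding Lemma~\ref{L2} one may take every $Y_m$ cofinite; then $i\mapsto m(i)$ is nondecreasing with $\lim_i m(i)=\infty$. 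Call $h\in\NneN$ \emph{slow} if $h(i)\le m(i)-2$ for all $i$; since $h\mapsto\min(h,m-2)$ shows the slow functions are cofinal from below in $(\NneN,\le)$, they are coinitial in $\NneN/\cU$, so $\kappa(\cU)$ is just the coinitiality of the linear order $\NneN/\cU$ (hence a regular, uncountable cardinal). For slow $h$ one has $\mathbf a_h(i)=\mathbf a_{h(i)}(i)$; feeding the chain inequalities $g(\mathbf a_p(i),\mathbf a_q(i))\le 1/m(i)$ and $g(\mathbf a_q(i),\mathbf a_p(i))\ge 1-1/m(i)$ (for $p<q<m(i)$) into Lemma~\ref{L0} yields, for slow $h,h'$,
\[
\mathbf a_h\prec_g\mathbf a_{h'}\iff\{i\colon h(i)<h'(i)\}\in\cU,
\]
and $\mathbf a_m\prec_g\mathbf a_h$ for every $m\in\bbN$ and slow $h$. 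Thus $h\mapsto\mathbf a_h$ embeds $\{\text{slow }h\}/\cU$ onto a $\prec_g$-chain lying entirely above $(\mathbf a_m)_{m}$ and order-isomorphic to $\NneN/\cU$.

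\emph{Proof that $\lambda^{*}\le\kappa(\cU)$.} Using regularity of $\kappa(\cU)$, recursively pick a $<_{\cU}$-decreasing, still coinitial family $(f_\gamma)_{\gamma<\kappa(\cU)}$ of slow functions (at stage $\gamma$ choose a slow function strictly $<_{\cU}$ below a common lower bound of $\{f_\beta\colon\beta<\gamma\}$ and of the $\gamma$-th member of a fixed coinitial family --- a lower bound exists because fewer than $\kappa(\cU)$ functions cannot be coinitial, hence are bounded below). By the dictionary, $\bigl((\mathbf a_m)_{m},(\mathbf a_{f_\gamma})_{\gamma<\kappa(\cU)}\bigr)$ is an $(\aleph_0,\kappa(\cU))$-$g$-pregap. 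If some $\mathbf c$ filled it, then $\mathbf a_m\prec_g\mathbf c$ for all $m$, so Lemma~\ref{L2} gives $h\in\NneN$ with $\mathbf a_m\prec_g\mathbf a_h\prec_g\mathbf c$. From the bound in the proof of Lemma~\ref{L2} ($g(c(i),\mathbf a_k(i))\ge 1-1/h(i)$ whenever $i\in X_{h(i)}$ and $k\le h(i)$), from $g(\mathbf c,\mathbf a_{f_\gamma})=0$, and from $\lim_i h(i)=\infty$, one gets $\{i\colon h(i)\ge f_\gamma(i)\}\notin\cU$, i.e.\ $h<_{\cU}f_\gamma$, for every $\gamma$; this contradicts coinitiality of $(f_\gamma)$. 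Hence the pregap is a gap, so $\lambda^{*}\le\kappa(\cU)$.

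\emph{Proof that $\kappa(\cU)\le\lambda^{*}$.} Let $\bigl((\mathbf c_m)_{m},(\mathbf d_\gamma)_{\gamma<\lambda}\bigr)$ be an $(\aleph_0,\lambda)$-$g$-gap. Put $w(i)=\max\{m\colon\mathbf c_0(i),\dots,\mathbf c_m(i)$ is a $g$-$\tfrac1m$-chain$\}$, and for $\gamma<\lambda$ let $e_\gamma(i)$ be the largest $m\le\min(w(i),m(i))$ such that $g(\mathbf c_k(i),\mathbf d_\gamma(i))\le 1/m$ and $g(\mathbf d_\gamma(i),\mathbf c_k(i))\ge 1-1/m$ for all $k\le m$. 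By Lemma~\ref{L0} applied to $\mathbf c_0\prec_g\cdots\prec_g\mathbf c_m\prec_g\mathbf d_\gamma$, one has $\{i\colon e_\gamma(i)\ge m\}\in\cU$ for each $m$, so $\lim_i e_\gamma(i)=\infty$, while $e_\gamma\le m$ pointwise. Suppose some $h\in\NneN$ satisfies $h<_{\cU}e_\gamma$ for all $\gamma$, and let $\mathbf e$ have representing sequence $e(i)=\mathbf c_{\min(h(i),w(i))}(i)$. Since $\min(h(i),w(i))\to\infty$ along $\cU$ and $\mathbf c_0(i),\dots,\mathbf c_{\min(h(i),w(i))}(i)$ is a $g$-chain of that length, $\mathbf c_m\prec_g\mathbf e$ for every $m$; and on the $\cU$-set where $h(i)<e_\gamma(i)\le w(i)$ we have $e(i)=\mathbf c_{h(i)}(i)$, and the defining property of $e_\gamma(i)$ with $k=h(i)$ gives $g(e(i),\mathbf d_\gamma(i))\le 1/e_\gamma(i)\to 0$ and $g(\mathbf d_\gamma(i),e(i))\ge 1-1/e_\gamma(i)\to 1$, so $\mathbf e\prec_g\mathbf d_\gamma$. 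Thus $\mathbf e$ would fill the pregap, a contradiction. Hence no $h\in\NneN$ lies $<_{\cU}$ below all the $e_\gamma$; equivalently, for each $h\in\NneN$ some $\gamma$ has $e_\gamma\le_{\cU}h$. Using that $e_\gamma\le m$ with $m$ nondecreasing, replace each $e_\gamma$ by a slow $\tilde e_\gamma\in\NneN$ with $\tilde e_\gamma\le_{\cU}e_\gamma$; then $\{\tilde e_\gamma\colon\gamma<\lambda\}$ is coinitial in $\NneN/\cU$, so $\kappa(\cU)\le\lambda$, whence $\kappa(\cU)\le\lambda^{*}$.

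\emph{Main obstacle.} The first inequality is essentially the construction above together with the bookkeeping inside Lemma~\ref{L2}. The real work is the second: a general $(\aleph_0,\lambda)$-gap need not sit on the canonical chains $\mathbf a_j(i)$, and because $\prec_g$ is \emph{not} transitive one may not freely compose relations $\mathbf x\prec_g\mathbf y\prec_g\mathbf z$. The device that makes the interpolating element $\mathbf e$ legitimate is to read the cut off against the given increasing sequence $(\mathbf c_m)$ and to work strictly \emph{inside} the honest $g$-$\tfrac1m$-chains $\mathbf c_0(i),\dots,\mathbf c_m(i),\mathbf d_\gamma(i)$, on which $\prec_g$ genuinely is a linear order; the normalization making $m(i)$ nondecreasing is what lets one pull the extracted rate functions $e_\gamma$ back into $\NneN$, and I expect that last passage to be the step that needs the most care.
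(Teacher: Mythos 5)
Your overall route is the paper's route: you build the same dictionary $h\mapsto\bfa_h$ between functions modulo $\cU$ and elements of the ultraproduct lying above the $\bfa_m$'s, you use Lemma~\ref{L2} as the interpolation device in both directions, and you transfer gaps back and forth. Your write-up is actually more explicit than the paper's at most points (the verification that the pregap built from a $<_{\cU}$-decreasing coinitial family is unfilled, and the extraction of the rate functions $e_\gamma$ from an arbitrary $(\aleph_0,\lambda)$-gap, are both carried out correctly, and your handling of the non-transitivity of $\prec_g$ by working inside honest chains is exactly right).

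The step that fails is precisely the one you flag at the end: pulling the $\cU$-unbounded rate functions back into $\NneN$. A function $e$ with $\lim_{i\to\cU}e(i)=\infty$ need not admit \emph{any} $\tilde e\in\NneN$ with $\tilde e\le_{\cU}e$: if $A_n\in\cU$ is a decreasing sequence with empty intersection and no pseudo-intersection in $\cU$ (such sequences exist whenever $\cU$ is not a P-point, and Dow's ultrafilters need not be P-points), put $e(i)=\max\{n\colon i\in A_n\}$; then for any nondecreasing unbounded $\tilde e$ the set $\{i\colon \tilde e(i)\le e(i)\}$ is almost contained in every $A_n$, hence cannot belong to $\cU$. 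The pointwise bound $e_\gamma\le m$ by a nondecreasing function bounds $e_\gamma$ from above and is of no help in producing a monotone minorant. The same issue touches your first inequality: the $h$ produced by Lemma~\ref{L2} is not nondecreasing, so $h<_{\cU}f_\gamma$ for all $\gamma$ does not by itself contradict coinitiality of $(f_\gamma)$ \emph{in} $\NneN/\cU$, and for a ``bad'' $h$ as above the element $\bfa_h$ would genuinely fill your pregap. The repair is to abandon monotonicity altogether: none of your estimates use that $h$ is nondecreasing, only that $h(i)\le m(i)-1$ and $\lim_{i\to\cU}h(i)=\infty$, so your argument really identifies the least gap cardinal with the coinitiality of the cut above the constants in the full reduced power $\bbN^{\bbN}/\cU$; one then invokes the identification of that cardinal with $\kappa(\cU)$ which the paper asserts (without proof) in the sentence defining $\kappa(\cU)$. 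This is also how the paper's own proof must be read, since it too produces non-monotone $h(\gamma)$'s and concludes with a gap in $\bbN^{\bbN}/\cU$.
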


\begin{proof}

The proof is similar to the  last paragraph of the proof of
\cite[Proposition~6]{Fa:Relative}. 
Let $Y_m$ and $\vec a_0(i),\vec a_1(i),\dots, \vec a_{m-1}(i)$ be
as in the paragraph before Lemma~\ref{L2} and we shall use the
notation $\bfa_h$ and $\bfa_m$ as introduced there.

Fix functions $h(\gamma)$, $\gamma<\kappa(\cU)$, which  together
with the constant functions $\bar m$, for $m\in \bbN$ form a gap. We
claim that $\bfa_{\bar m}$, for $m\in \bbN$, form a gap with
$\bfb_\gamma=\bfa_{h(\gamma)}$, for $\gamma<\kappa(\cU)$. It is
clear that these  elements form a
$(\aleph_0,\kappa(\cU))$-$g$-pregap and by Lemma~\ref{L2} this
pregap is not separated.

Now assume $\lambda$ is the minimal cardinal such that $\prod_{\cU}
M_i$ contains an $(\aleph_0,\lambda)$-$g$-gap, and let $\bfa_m$, for $m\in \bbN$,
$\bfb_\gamma$, for $\gamma<\lambda$, be an
$(\aleph_0,\lambda)$-$g$-gap in $\prod_{\cU}M_i$. Fix a representing
sequence $a_m(i)$, for $i\in \bbN$, of $\bfa_m$. For each $m$ the
set
\[
X_m=\{i \in Y_m: i \geq m \mbox{ and } a_0(i),a_1(i),\dots, a_{m-1}(i)\text{ form a
$g$-$1/m$-chain}\}
\]
belongs to $\cU$ and $X_m\supseteq X_{m+1}$ for all $m$. As before,
for $h\in \bbN$ define $\bfa_h$ via its representing sequence. Let
$a_h(i)=a_{h(i)}(i)$ if  $i\in X_{h(i)}$ and $a_h(i)=a_m(i)$ if
$i\notin X_{h(i)}$ and $m$ is the maximal such that $i\in X_m$. For
$i\notin X_0$ define $a_h(i)$ arbitrarily. Note that
 $\lim_{i\to \cU} h(i)=\infty$ if and only if  $\bfa_m\prec_g \bfa_h$.

By Lemma~\ref{L2} for every $\gamma<\lambda$ we can find $h(\gamma)$
such that $\bfb'_\gamma=\bfa_{h(\gamma)}$ is $\prec_g \bfb_\gamma$
and $\bfa_m\prec_g\bfb'_\gamma$ for all $m$. In addition we choose
$h(\gamma)$ so that $h(\gamma)\leq^*h(\gamma')$ for all
$\gamma'<\gamma$. This is possible by the minimality of $\lambda$.
The functions $h(\gamma)$, for $\gamma<\lambda$, together with the
constant functions form an $(\aleph_0,\lambda)$-gap in
$\bbN^\bbN/\cU$.
\end{proof}

\begin{proposition}\label{P3} Assume the Continuum Hypothesis fails.
Assume $M_i$, for $i\in \bbN$, is a sequence of tracial von Neumann
algebras
 with the order property. Then there exist ultrafilters $\cU$
and $\cV$ such that the ultraproducts $\prod_i M_i/c_{\cU}$ and
$\prod_i M_i/c_{\cV}$ are not isomorphic.
\end{proposition}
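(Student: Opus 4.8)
The plan is to extract from Lemma~\ref{L3} an isomorphism invariant of $\prod_\cU M_i$ that separates the two ultraproducts -- namely the coinitiality $\kappa(\cU)$ -- and then to choose $\cU$ and $\cV$ so that this invariant takes different values.

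First I would observe that Lemma~\ref{L3} applies to \emph{every} nonprincipal ultrafilter on $\bbN$. Indeed, by the order property of the sequence $(M_i)$ there is a single formula $g=\|P(\vec x,\vec y)\|_2$ such that for every $m$ all but finitely many of the $M_i$ carry a $g$-$1/m$-chain of length $m$; hence for any nonprincipal $\cU$ this set of indices belongs to $\cU$, and Lemma~\ref{L3} tells us that $\kappa(\cU)$ equals the least cardinal $\lambda$ for which $\prod_\cU M_i$ contains an $(\aleph_0,\lambda)$-$g$-gap. The next point is that this least $\lambda$ is an isomorphism invariant of $\prod_\cU M_i$: since $g$ is the $\ell^2$-norm of a *-polynomial, any trace-preserving *-isomorphism $\Phi\colon\prod_\cU M_i\to\prod_\cV M_i$ satisfies $g(\Phi\vec x,\Phi\vec y)=g(\vec x,\vec y)$ for tuples from the unit ball, so $\Phi$ preserves $\prec_g$ and carries $(\aleph_0,\lambda)$-$g$-gaps to $(\aleph_0,\lambda)$-$g$-gaps. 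Therefore $\prod_\cU M_i\cong\prod_\cV M_i$ forces $\kappa(\cU)=\kappa(\cV)$ by Lemma~\ref{L3}. (Here \emph{isomorphism} is understood to respect the tracial structure; this is automatic when the $M_i$ are factors, which is the case in all of our applications.)

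It remains to produce nonprincipal ultrafilters $\cU,\cV$ on $\bbN$ with $\kappa(\cU)\ne\kappa(\cV)$, and this is the only place where the failure of the Continuum Hypothesis enters. It is a theorem of Dow \cite{Do:Ultrapowers} (see also \cite{Fa:Relative}) that when $2^{\aleph_0}>\aleph_1$ the coinitiality $\kappa$ is not constant over the nonprincipal ultrafilters on $\bbN$; concretely one can take $\cU$ with $\kappa(\cU)=\aleph_1$ and $\cV$ with $\kappa(\cV)=2^{\aleph_0}$. Choosing any two ultrafilters realizing distinct values of $\kappa$ then completes the proof. I expect the genuine obstacle to lie entirely in this last step: the model-theoretic half is immediate once Lemma~\ref{L3} and the invariance observation are in place, whereas the construction of ultrafilters with prescribed (distinct) coinitialities is the substantive set-theoretic input, supplied by \cite{Do:Ultrapowers}; one should also confirm, as already remarked in the excerpt just after the definition of $\kappa(\cU)$, that the coinitiality of $\NneN/\cU$ used here coincides with Dow's $\kappa(\cU)$.
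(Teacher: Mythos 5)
Your proposal is correct and follows essentially the same route as the paper: invoke Dow's theorem (\cite[Theorem~2.2]{Do:Ultrapowers}) to produce nonprincipal $\cU$ and $\cV$ with distinct coinitialities (the paper takes $\kappa(\cU)=\aleph_1$ and $\kappa(\cV)=\aleph_2$ rather than $2^{\aleph_0}$, which is immaterial), and then use Lemma~\ref{L3} to read off the minimal cardinality $\lambda$ of an $(\aleph_0,\lambda)$-$g$-gap as an isomorphism invariant separating the two ultraproducts. Your explicit caveat that preservation of $g$ under an isomorphism rests on the trace being respected (automatic for factors) is a point the paper leaves implicit.
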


\begin{proof}
Fix $n$ and a $2n$-ary formula  $g$  such that
 for every $m\in \bbN$ the set of all $i$ such
that there is a $g$-$1/m$-chain of length $\geq m$ in
 $M_i^n$ is cofinite.

Let $X\subseteq \bbN$ be an infinite set such that for all $m\in
\bbN$ there is a $g$-$1/m$-chain of length $m$ in the unit ball of
$M_i^n$ for all but finitely many $i$ in $X$. By
\cite[Theorem~2.2]{Do:Ultrapowers} (also proved by Shelah,
\cite{She:Classification}) there are  $\cU$ and $\cV$ so that
$\kappa(\cU)=\aleph_1$ and $\kappa(\cV)=\aleph_2$ (here $\aleph_1$
and $\aleph_2$ are the least two uncountable cardinals; all that
matters for us is that they are both $\leq 2^{\aleph_0}$ and
different).

 By Lemma~\ref{L3} the ultraproduct associated with $\cU$ has
$(\aleph_0,\aleph_1)$-$g$-gaps 
and the ultraproduct
associated with $\cV$ has $(\aleph_0,\aleph_2)$-$g$-gaps but no
$(\aleph_0,\aleph_1)$-$g$-gaps.
Therefore these ultraproducts are not isomorphic.
\end{proof}

Our definition of the order property for relative commutants is a
bit more restrictive than that of the order property for tracial von
Neumann algebras. 
We say that $a$ and $b$ in a tracial von Neumann algebra
\emph{$1/m$-commute} if $\|[a,b]\|_2<1/m$.

\begin{definition}\label{Def.vNA.rcOP}
If~$M$ is a tracial von Neumann algebra then we say that the
\emph{relative commutant type of $M$ has the order property with
respect to $g$} if there are $n$ and  a *-polynomial $P(x_1,\dots,
x_n, y_1,\dots, y_n)$ in $2n$ variables such that with $g(\vec
x,\vec y)=\|P(\vec x,\vec y)\|_2$,
  for  every finite $F\subseteq M$,  
  and every $m\in \bbN$,  there is a
$g$-$1/m$-chain of length $m$ in $M_{\leq 1}^n$ all of whose
elements $1/m$-commute with all elements of~$F$.
\end{definition}

Another remark for model-theorists is in order. In this definition
we could have allowed $g$ to be an arbitrary atomic formula, but we don't have
an application for the more general definition.

\begin{lemma} \label{L4} Assume $M$ is a separable tracial von Neumann algebra
and $\cU$ is a nonprincipal ultrafilter on $\bbN$.  If the relative
commutant type of $M$ has the order property with respect to
$g$, then for every uncountable regular cardinal $\lambda$ the
following are equivalent.
\begin{enumerate}
\item \label{L4.1} $\kappa(\cU)=\lambda$.
\item \label{L4.2} The relative commutant $M'\cap M^{\cU}$ contains an
$(\aleph_0,\lambda)$-$g$-gap.
\end{enumerate}
Note that $g$ and $\prec_g$ are isomorphism invariants for relative
commutants.
\end{lemma}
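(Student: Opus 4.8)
The strategy is to mimic the proof of Lemma~\ref{L3}, but working inside the relative commutant $M'\cap M^{\cU}$ rather than the full ultraproduct, using the extra ``$1/m$-commuting with a finite set $F$'' clause in Definition~\ref{Def.vNA.rcOP} to ensure that the chains we build actually land in (or converge into) the relative commutant. Since $M$ is separable, fix a countable dense subset $\{d_0,d_1,\dots\}$ of $\ubM$ and set $F_m=\{d_0,\dots,d_{m-1}\}$; an element $\bfb\in M^{\cU}$ lies in $M'\cap M^{\cU}$ if and only if it commutes with every $d_j$. Apply Definition~\ref{Def.vNA.rcOP} with $F=F_m$, $\e=1/m$, and length $m$ to obtain, for each $i$ in a suitable $Y_m\in\cU$ with $Y_m\supseteq Y_{m+1}$ and $\bigcap_m Y_m=\emptyset$, a $g$-$1/m$-chain $\vec a_0(i),\dots,\vec a_{m-1}(i)$ in $M_{\leq 1}^n$ whose entries $1/m$-commute with $F_m$. (Here $M_i=M$ for all $i$.) Then define $\bfa_h$, $\bfa_m$ exactly as in the paragraph before Lemma~\ref{L2}. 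The point is that for the constant function $\bar m$ and any $i\in Y_{m'}$ with $m'>m$, the entries of $\vec a_m(i)$ $1/m'$-commute with $F_{m'}\supseteq F_m\ni d_j$ for $j<m$, so $[\bfa_m,d_j]=0$ for every $j$, i.e.\ $\bfa_m\in M'\cap M^{\cU}$. The same computation shows $\bfa_h\in M'\cap M^{\cU}$ whenever $\lim_{i\to\cU}h(i)=\infty$.

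\medbreak
\noindent\emph{(1)$\Rightarrow$(2).} Assuming $\kappa(\cU)=\lambda$, fix functions $h(\gamma)$, $\gamma<\lambda$, in $\NneN$ which together with the constant functions $\bar m$ form an $(\aleph_0,\lambda)$-gap in $\NneN/\cU$ (such a gap exists because $\lambda=\kappa(\cU)$ is the coinitiality, using the standard fact, as in the proof of Lemma~\ref{L3}, that one can find a $\leq^*$-decreasing unbounded-below family of the minimal length $\kappa(\cU)$). Each $h(\gamma)$ tends to $\infty$ along $\cU$, so $\bfb_\gamma:=\bfa_{h(\gamma)}\in M'\cap M^{\cU}$, and by Lemma~\ref{L2} (applied inside the ultraproduct $\prod_{\cU}M_i=M^{\cU}$, which is legitimate since Lemma~\ref{L2} makes no commutation assumptions) the pregap $(\bfa_m)_m,(\bfb_\gamma)_{\gamma<\lambda}$ is a $g$-pregap that cannot be separated by any $\bfc\in M^{\cU}$, a fortiori not by any $\bfc\in M'\cap M^{\cU}$. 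Hence $M'\cap M^{\cU}$ contains an $(\aleph_0,\lambda)$-$g$-gap.

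\medbreak
\noindent\emph{(2)$\Rightarrow$(1).} Given an $(\aleph_0,\lambda)$-$g$-gap $(\bfa_m)_m,(\bfb_\gamma)_{\gamma<\lambda}$ in $M'\cap M^{\cU}$ with $\lambda$ minimal, run the argument of the second half of the proof of Lemma~\ref{L3} verbatim: fix representing sequences, form the sets $Y_m$ witnessing $g$-$1/m$-chains among $a_0(i),\dots,a_m(i)$, define $\bfa_h$ via its representing sequence, and use Lemma~\ref{L2} together with minimality of $\lambda$ to replace $\bfb_\gamma$ by $\bfb'_\gamma=\bfa_{h(\gamma)}$ with $h(\gamma)\leq^*h(\gamma')$ for $\gamma'<\gamma$. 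Then the $h(\gamma)$ together with the constant functions form an $(\aleph_0,\lambda)$-gap in $\NneN/\cU$, so $\kappa(\cU)\leq\lambda$; and the argument of (1)$\Rightarrow$(2) produces an $(\aleph_0,\kappa(\cU))$-$g$-gap in $M'\cap M^{\cU}$, so minimality of $\lambda$ forces $\lambda\leq\kappa(\cU)$. The final sentence, that $g$ and $\prec_g$ are isomorphism invariants of the relative commutant, is immediate since $g$ is a $*$-polynomial expression and any isomorphism preserves $*$-polynomials, the $\ell^2$-metric, and the relation $\prec_g$.

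\medbreak
\noindent\emph{Main obstacle.} The only genuine subtlety, as opposed to bookkeeping, is ensuring that the elements produced by Definition~\ref{Def.vNA.rcOP} genuinely converge \emph{into} the relative commutant — that is, that the diagonal $M$ (not just $F_m$) commutes with $\bfa_m$ and $\bfa_h$ in the limit. This is where separability is essential: the $F_m$ exhaust a dense subset of $\ubM$, and $1/m$-commutation with $F_m$ for cofinitely many (along $\cU$) indices, combined with uniform continuity of the commutator in $\|\cdot\|_2$ on the unit ball, upgrades to exact commutation with all of $\ubM$, hence with all of $M$, in the ultrapower. Without separability this passage fails, which is exactly why the hypothesis is present. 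Everything else is a transcription of the proofs of Lemmas~\ref{L2} and~\ref{L3}.
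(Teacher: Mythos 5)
Your proposal is correct and follows essentially the same route as the paper, whose own proof of Lemma~\ref{L4} simply points back to Lemma~\ref{L3}: you transcribe that argument and supply the one genuinely new ingredient, namely that separability of $M$ plus the $1/m$-commutation clause of Definition~\ref{Def.vNA.rcOP} (via uniform $\|\cdot\|_2$-continuity of the commutator on the unit ball) forces the elements $\bfa_m$ and $\bfa_h$ into $M'\cap M^{\cU}$. This is exactly the intended adaptation, so nothing further is needed.
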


\begin{proof} The proof is very similar to the proof of
Lemma~\ref{L3} and we add just a few clarifying remarks. Returning to the paragraphs before
Lemma~\ref{L2}, let us define the sets $Y_m$ in the context of the relative commutant.  Fix a countable dense
subset $F$ of our separable $M$ and write $F$ as an increasing sequence of finite sets $F_n$.  Now define
a decreasing sequence of sets $Y_m \in \cU$ such that $\bigcap Y_m = \emptyset$ and
such that for all $i \in Y_m$, there is a $g$-$1/m$-chain $\vec a_0(i), \vec a_1(i)$,\dots, $\vec a_{m-1}(i)$
made up of elements of $M$ which $1/m$-commute with all the elements of $F_m$.
The point of the exercise is that if we now define $\vec a_h$ as before, the resulting element of the ultraproduct will be in the relative commutant.
Lemma~\ref{L2} can now be reformulated so that if $\bfb$ is in the relative commutant then so is the constructed $\bfa_h$.  Looking at the proof of 
Lemma~\ref{L3}, the proof may be copied verbatim simply replacing each mention of the ultraproduct with the relative commutant and noting that the various elements of the ultraproduct constructed in the proof are now provably in the relative commutant.

 The
fact that $g$ and $\prec_g$ are isomorphism invariants for relative
commutants follows from the fact that the evaluation of $g$ depends
only on the elements of the relative commutant (i.e., $g$ is a
quantifier-free formula).
\end{proof}

Using Lemma~\ref{L4} instead of Lemma~\ref{L3} the proof of
Proposition~\ref{P3} gives the following.

\begin{proposition} \label{P4} Assume the Continuum Hypothesis fails.
If $M$ is a separable tracial von Neumann algebra whose relative
commutant type has the order property  then there are nonprincipal
ultrafilters $\cU$ and $\cV$ on $\bbN$ such that the relative
commutants $M'\cap M^{\cU}$ and $M'\cap M^{\cV}$ are not isomorphic.
\qed
\end{proposition}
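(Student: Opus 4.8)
The plan is to imitate the proof of Proposition~\ref{P3}, substituting Lemma~\ref{L4} for Lemma~\ref{L3} at the appropriate place. Fix $n$ and the $2n$-ary formula $g$ (arising from a $*$-polynomial $P$ in $2n$ variables) witnessing that the relative commutant type of $M$ has the order property in the sense of Definition~\ref{Def.vNA.rcOP}. Since the Continuum Hypothesis fails, $\aleph_1$ and $\aleph_2$ are distinct uncountable regular cardinals, both $\leq 2^{\aleph_0}$. By \cite[Theorem~2.2]{Do:Ultrapowers} (equivalently, by Shelah's construction in \cite{She:Classification}), there exist nonprincipal ultrafilters $\cU$ and $\cV$ on $\bbN$ with $\kappa(\cU)=\aleph_1$ and $\kappa(\cV)=\aleph_2$.

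Now apply Lemma~\ref{L4} to $M$ with each of these ultrafilters. Because $M$ is separable and its relative commutant type has the order property witnessed by $g$, Lemma~\ref{L4} tells us that for every uncountable regular $\lambda$ we have $\kappa(\cU)=\lambda$ iff $M'\cap M^{\cU}$ contains an $(\aleph_0,\lambda)$-$g$-gap, and likewise for $\cV$. Hence $M'\cap M^{\cU}$ contains an $(\aleph_0,\aleph_1)$-$g$-gap but no $(\aleph_0,\lambda)$-$g$-gap for any uncountable regular $\lambda\neq\aleph_1$; symmetrically, $M'\cap M^{\cV}$ contains an $(\aleph_0,\aleph_2)$-$g$-gap but none of type $(\aleph_0,\aleph_1)$.

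It remains to see that this discrepancy obstructs an isomorphism. This is where the remark at the end of Lemma~\ref{L4} does the work: $g$ is a quantifier-free ($*$-polynomial) formula, so its evaluation on a tuple from a relative commutant depends only on that tuple, and consequently both $g$ and the relation $\prec_g$ are preserved by any $*$-isomorphism between relative commutants. Therefore the existence of an $(\aleph_0,\lambda)$-$g$-gap is an isomorphism invariant: an isomorphism $\Phi\colon M'\cap M^{\cU}\to M'\cap M^{\cV}$ would carry a witnessing $(\aleph_0,\aleph_1)$-$g$-gap in the domain to an $(\aleph_0,\aleph_1)$-$g$-gap in the range, contradicting the fact that $M'\cap M^{\cV}$ has no such gap. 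Hence $M'\cap M^{\cU}\not\cong M'\cap M^{\cV}$, as required.

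The only genuinely delicate point is the invariance claim, i.e. that $\prec_g$ is transported by isomorphisms of relative commutants and that "being a gap" (non-separation) is preserved in both directions; but this is exactly the content of the concluding sentence of Lemma~\ref{L4}, together with the elementary observation that an isomorphism maps a pregap to a pregap and a separating element to a separating element. Everything else is a direct transcription of the argument of Proposition~\ref{P3}. \qed
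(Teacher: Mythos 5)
Your proposal is correct and follows exactly the route the paper intends: the paper's own ``proof'' of Proposition~\ref{P4} is the one-line remark that one runs the argument of Proposition~\ref{P3} with Lemma~\ref{L4} in place of Lemma~\ref{L3}, which is precisely what you have written out, including the isomorphism-invariance of $g$ and $\prec_g$ noted at the end of Lemma~\ref{L4}. No issues.
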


The preceding proofs used very little of the assumption that we were
working with tracial von Neumann algebras  and they apply to the
context of C*-algebras, as well as unitary groups of tracial von
Neumann algebras  or C*-algebras.




The following proposition
 can be proved by copying the proofs of Proposition~\ref{P3} and
Proposition~\ref{P4} verbatim.


\begin{proposition} \label{P4.C*} Assume the Continuum Hypothesis fails, and let $A$ be a separable C*-algebra  or metric group.  If $A$ has the order
property then there are nonprincipal ultrafilters $\cU$ and $\cV$ on
$\bbN$ such that $A^{\cU}$ and $A^{\cV}$ are not isomorphic. If the
relative commutant type of $A$ has the order property then there are
nonprincipal ultrafilters $\cU$ and $\cV$ on $\bbN$ such that the
relative commutants $A'\cap A^{\cU}$ and $A'\cap A^{\cV}$ are not
isomorphic. \qed
\end{proposition}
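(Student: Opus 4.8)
The plan is to reduce Proposition~\ref{P4.C*} to the already-established Propositions~\ref{P3} and~\ref{P4} by checking that every ingredient of those proofs survives the transition from tracial von Neumann algebras to C*-algebras and to metric groups. Concretely, I would first note that the entire machinery of Section~\ref{S.OA}---the relation $\prec_{g,\e}$, the notion of a $g$-$\e$-chain, Lemmas~\ref{L0}, \ref{L2}, \ref{L3}, \ref{L4}, and the cardinal $\kappa(\cU)$---was built only from a function $g$ satisfying the abstract Properties~\ref{Properties.g} (uniform continuity on powers of the unit ball, and canonical extension to ultrapowers via \L o\'s's theorem). So the real task is: (i) identify the correct ``unit ball'' and the correct metric in each of the two new settings, (ii) observe that formulas of the form $\|P(\vec x,\vec y)\|$ (C*-case, operator norm) and $d_G(P(\vec x,\vec y),1)$ or $\ell(P(\vec x,\vec y))$ (group case, bi-invariant metric) again satisfy Properties~\ref{Properties.g}, and (iii) confirm that the ideal/normal-subgroup $c_{\cU}$ and the quotient ultraproduct $A^{\cU}=\ell^\infty(A)/c_\cU$ in these settings are exactly the metric ultraproducts to which \L o\'s's theorem (Lemma~\ref{L0}) applies.

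Next I would walk through the two conclusions in turn. For the first statement (ordinary ultrapower $A^\cU$): given that $A$ has the order property witnessed by some $g$, the proof of Proposition~\ref{P3} goes through word for word with $M_i$ all equal to $A$; one uses \cite[Theorem~2.2]{Do:Ultrapowers} (or Shelah) to fix nonprincipal $\cU,\cV$ with $\kappa(\cU)=\aleph_1\neq\aleph_2=\kappa(\cV)$, both $\le2^{\aleph_0}$, and then Lemma~\ref{L3} (whose proof used nothing beyond Properties~\ref{Properties.g} and Lemma~\ref{L2}) shows $A^\cU$ has $(\aleph_0,\aleph_1)$-$g$-gaps but no $(\aleph_0,\lambda)$-$g$-gap for $\lambda\neq\aleph_1$, and symmetrically for $A^\cV$; since $g$ and $\prec_g$ are computed from the algebraic/metric structure alone they are isomorphism invariants, so $A^\cU\not\cong A^\cV$. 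For the second statement (relative commutant $A'\cap A^\cU$): here one copies the proof of Proposition~\ref{P4}, which rests on Lemma~\ref{L4}; the only point needing care is that in the C*- and group settings the ``relative commutant type has the order property'' must be interpreted with the appropriate notion of $\e$-commuting ($\|[a,b]\|<\e$ for C*-algebras, $d_G([a,b],1)<\e$ or the analogous commutator condition for groups), and that $g$ remains a quantifier-free formula, so its evaluation depends only on the relative-commutant elements and is therefore an isomorphism invariant of $A'\cap A^\cU$, exactly as recorded in the remark after Lemma~\ref{L4}.

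I expect the only genuine obstacle---and it is a mild one, which is why the authors can say the proof is ``verbatim''---to be the bookkeeping of definitions: making sure that the order property and the relative-commutant order property have been (or should be) phrased in each new category using the correct unit ball and metric, so that Properties~\ref{Properties.g} hold. Once that is pinned down, Lemma~\ref{L0} applies verbatim because the metric ultraproduct $A^\cU$ is precisely $\prod_\cU A$ in the sense of the model theory of metric structures (C*-algebras under the operator norm on the unit ball; metric groups under the bi-invariant metric), and then every subsequent lemma and proposition in this section is insensitive to which category we are in. Thus the proof amounts to the single sentence already in the text---``can be proved by copying the proofs of Proposition~\ref{P3} and Proposition~\ref{P4} verbatim''---together with the observation that the $g$'s arising from $\|P\|$ and from $d_G(P,1)$ satisfy Properties~\ref{Properties.g} and that $g$ is an isomorphism invariant of the ultrapower and of the relative commutant.
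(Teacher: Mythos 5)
Your proposal is correct and follows exactly the route the paper takes: the paper's entire justification is that Propositions~\ref{P3} and~\ref{P4} can be copied verbatim because the machinery of Section~\ref{S.OA} depends only on a function $g$ satisfying Properties~\ref{Properties.g}, and you have simply made explicit the bookkeeping (correct unit ball, metric, and form of $g$ in the C*- and group settings) that the authors leave implicit. Nothing further is needed.
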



\section{Type  II$_1$ factors}

The main result of this section is Theorem~\ref{T.type-II-1}. The
case \eqref{T.type-II-1.3} implies \eqref{T.type-II-1.1} was proved
in \cite{GeHa}.

For a II$_1$ factor $M$ let $U(M)$ denote its unitary group. It is a
complete metric group with respect to the $\ell^2$-metric. Hence the
ultrapower of $U(M)$ is defined in the usual manner (see
\cite{Pe:Hyperlinear} and \cite{BYBHU}).

\begin{thm} \label{T.type-II-1} For a
II$_1$ factor $M$ of cardinality $\fc$  the following are
equivalent.
\begin{enumerate}
\item \label{T.type-II-1.1} For all nonprincipal ultrafilters $\cU$ and $\cV$ the
ultrapowers $M^{\cU}$ and $M^{\cV}$ are isomorphic.

\item \label{T.type-II-1.2b}For all nonprincipal ultrafilters $\cU$ and $\cV$ the
 ultrapowers of the unitary groups
$U(M)^{\cU}$ and $U(M)^{\cV}$ are isomorphic.

\item \label{T.type-II-1.3}The Continuum Hypothesis holds.
\end{enumerate}

\end{thm}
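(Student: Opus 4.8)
The plan is to prove the cycle of implications \eqref{T.type-II-1.3}$\Rightarrow$\eqref{T.type-II-1.1}$\Rightarrow$\eqref{T.type-II-1.2b}$\Rightarrow$\eqref{T.type-II-1.3}, or some convenient rearrangement of it. The implication \eqref{T.type-II-1.3}$\Rightarrow$\eqref{T.type-II-1.1} is the Ge--Hadwin result cited above: under CH, a separable (more generally, character density $\leq\fc$) structure has all its ultrapowers saturated of density character $\fc=\aleph_1$, and since they share the same (continuous) theory and the same density character, they are isomorphic by a back-and-forth argument. I would simply invoke \cite{GeHa} here, or sketch the saturation-plus-uniqueness argument in one line. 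The implication \eqref{T.type-II-1.1}$\Rightarrow$\eqref{T.type-II-1.2b} should be essentially formal: an isomorphism of $M^{\cU}$ onto $M^{\cV}$ restricts to an isomorphism of unitary groups, and one checks that the unitary group of $M^{\cU}$ is (isomorphic as a metric group to) the group ultrapower $U(M)^{\cU}$ --- this is a standard fact about tracial ultrapowers that I would state and reference rather than prove.

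The substance is the implication $\neg\eqref{T.type-II-1.3}\Rightarrow\neg\eqref{T.type-II-1.2b}$ (and likewise $\neg\eqref{T.type-II-1.3}\Rightarrow\neg\eqref{T.type-II-1.1}$), and this is where the machinery of Section~\ref{S.OA} does the work. By Proposition~\ref{P4.C*} (applied to the metric group $U(M)$) it suffices to show that $U(M)$ has the order property in the sense of Definition~\ref{Def.vNA.OP}: there is a formula $g$ such that for every $\e>0$, $U(M)$ has arbitrarily long finite $g$-$\e$-chains. For a II$_1$ factor this is where one uses the internal structure: a II$_1$ factor contains a copy of the hyperfinite II$_1$ factor $R$, hence for every $k$ a unital copy of the matrix algebra $M_k(\bbC)$, and inside $M_k(\bbC)$ one finds $k$ unitaries (e.g. diagonal unitaries with increasingly many eigenvalues equal to $-1$, or a suitable sequence of projections viewed as self-adjoint unitaries) whose pairwise $\ell^2$-distances realize the chain inequalities $g(u_i,u_j)\leq\e$, $g(u_j,u_i)\geq 1-\e$ for a fixed $g$ built from a $*$-polynomial --- the natural candidate being something like $g(x,y)=\|x-y\|_2$ together with a normalization, or a trace-difference polynomial, chosen so that the two inequalities in the definition of $\prec_{g,\e}$ can be simultaneously met by a monotone family of projections/unitaries. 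The key point is that as $k\to\infty$ one gets chains of every finite length in a \emph{single} formula $g$, with the $\e$ absorbed by taking $k$ large.

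The main obstacle I anticipate is choosing the witnessing $*$-polynomial $P$ (equivalently, the formula $g$) correctly: the relation $\prec_{g,\e}$ is deliberately asymmetric ($g(x_1,x_2)$ small \emph{and} $g(x_2,x_1)$ large), so a symmetric quantity like $\|x-y\|_2$ alone will not do --- one needs an asymmetric algebraic expression whose value on an ordered pair of projections $p_i\leq p_j$ (or a chain of partial isometries) detects the direction of the order. A clean way is to take $g(x,y)=\|xy-x\|_2$ or $\|(1-y)x\|_2$ on projections: if $p_i\leq p_j$ then $p_j p_i=p_i$ so $g(p_i,p_j)=0$, while $g(p_j,p_i)=\|p_j-p_i\|_2=\sqrt{\tau(p_j)-\tau(p_i)}$ can be made $\geq 1-\e$ by spacing the traces, and one rescales the projections to unitaries $1-2p$ if the group formulation is needed. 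Once this $g$ is pinned down, verifying Properties~\ref{Properties.g} and the existence of arbitrarily long chains in every II$_1$ factor is routine, and then Propositions~\ref{P3}/\ref{P4.C*} close the argument.
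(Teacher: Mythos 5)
Your high-level architecture matches the paper's: Ge--Hadwin for the CH direction, the formal identity $U(M^{\cU})=U(M)^{\cU}$ for \eqref{T.type-II-1.1}$\Rightarrow$\eqref{T.type-II-1.2b}, and the order-property machinery of Section~\ref{S.OA} for the $\neg$CH direction. But the step you yourself flag as the main obstacle --- choosing the witnessing formula --- is where your proposal breaks. The formula $g(x,y)=\|xy-x\|_2$ (or $\|(1-y)x\|_2$, or anything whose ``large'' direction is detected by $\|p_j-p_i\|_2=\sqrt{\tau(p_j)-\tau(p_i)}$ on a monotone family of projections) cannot witness the order property in a II$_1$ factor. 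For a fixed $\e<1/2$, a $g$-$\e$-chain $p_1\prec_{g,\e}\cdots\prec_{g,\e}p_k$ would require $\tau(p_j)-\tau(p_i)\geq(1-\e)^2$ for \emph{every} pair $i<j$, and summing along consecutive pairs gives $\tau(p_k)-\tau(p_1)\geq(k-1)(1-\e)^2$, which exceeds $1$ as soon as $k>1+(1-\e)^{-2}$. So ``spacing the traces'' only buys chains of bounded length; the order property demands arbitrarily long chains for each fixed $\e$. (The superficially similar formula $\|xy-y\|$ does work in Lemma~\ref{L.B3}, but only because that setting uses the operator norm, where the value $1$ is achieved at a single point of the spectrum and nothing accumulates against a bounded trace.)

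The paper's fix is to abandon monotone one-variable chains entirely and use the $4$-ary formula $g(a_1,b_1,a_2,b_2)=\|[a_1,b_2]\|_2$ evaluated on pairs built from tensor products inside $M_{2^n}(\bbC)\subseteq M$ (Lemma~\ref{L.A1}): with $a_i,b_i$ as there, $[a_i,b_j]=0$ for $i\leq j$ while $\|[a_i,b_j]\|_2=2$ for $i>j$, and the commutator values do not accumulate against any bounded quantity, so one gets chains of every length with $\e=0$. You would need this (or something with the same non-accumulating character) to make your argument go through. Separately, your route to $\neg$\eqref{T.type-II-1.2b} --- establishing the order property for the metric group $U(M)$ directly and invoking Proposition~\ref{P4.C*} --- faces the same obstruction in the $\ell^2$-metric and would again need a commutator-type formula; the paper instead sidesteps this entirely by deducing $\neg$\eqref{T.type-II-1.2b} from $\neg$\eqref{T.type-II-1.1} via $U(M^{\cU})=U(M)^{\cU}$ together with Dye's theorem that a von Neumann algebra is determined by its unitary group, which is both shorter and avoids constructing any chains of unitaries.
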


In fact
the implications \eqref{T.type-II-1.1} $\Leftrightarrow$  \eqref{T.type-II-1.2b} $\Rightarrow$ \eqref{T.type-II-1.3} are true
for an arbitrary II$_1$ factor.

The proof of this theorem will be given after a sequence of lemmas.

On  a II$_1$ factor $M$  define $g\colon (\ubM)^4\to \bbR$ by
\[
g(a_1,b_1,a_2,b_2)=\|[a_1,b_2]\|_2.
\]
In the following we consider $M_{2^n}(\bbC)$ with respect to its
$\ell_2$-metric. The analogous statements for the operator metric
are true, and easier to prove (see \cite{Fa:Relative}).

\begin{lemma} \label{L.A1}
\begin{enumerate}
\item \label{L.A1.1} The sequence $M_{2^n}(\bbC)$, for $n\in \bbN$, has the order property.
\item \label{L.A1.2} The sequence $M_{n}(\bbC)$, for $n\in \bbN$, has the order property.
\item \label{L.A1.3} Every  II$_1$ factor (and every sequence of II$_1$ factors) has the order property.
\end{enumerate}
\end{lemma}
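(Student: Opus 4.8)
The plan is to write down one explicit finite ``model'' of an arbitrarily long $g$-$\e$-chain inside a full matrix algebra, and then transport it into the ambient algebras of all three parts. The witness in every case will be the formula $g$ already fixed above, $g(a_1,b_1,a_2,b_2)=\|[a_1,b_2]\|_2$ (so $n=2$ and the tuples are pairs $(a,b)$); using a single $g$ is precisely what is needed for the statements about sequences.

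For the model, work in $M_{2^k}(\bbC)=M_2(\bbC)^{\otimes k}$ with its normalized trace, and fix in $M_2(\bbC)$ the symmetries $w=\left(\begin{smallmatrix}1&0\\0&-1\end{smallmatrix}\right)$ and $v=\left(\begin{smallmatrix}0&1\\1&0\end{smallmatrix}\right)$, for which $\|[w,v]\|_2=2$. For $1\le i\le k$ set $\tilde a_i=w^{\otimes(i-1)}\otimes 1^{\otimes(k-i+1)}$ and $\tilde b_i=1^{\otimes(i-1)}\otimes v\otimes 1^{\otimes(k-i)}$; these are unitaries of operator norm $1$, hence lie in the unit ball. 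I claim $\tilde{\vec x}_i=(\tilde a_i,\tilde b_i)$, $1\le i\le k$, form a $g$-$0$-chain of length $k$, which is immediate from the tensor structure: for $i<j$ the tensor slots on which $\tilde a_i$ and $\tilde b_j$ act nontrivially are the disjoint sets $\{1,\dots,i-1\}$ and $\{j\}$, so $[\tilde a_i,\tilde b_j]=0$ and $g(\tilde{\vec x}_i,\tilde{\vec x}_j)=0$; whereas for $i<j$ the element $[\tilde a_j,\tilde b_i]$ is the elementary tensor equal to $[w,v]$ in slot $i$ and to a unitary of $\|\cdot\|_2$-norm $1$ in every other slot, so $g(\tilde{\vec x}_j,\tilde{\vec x}_i)=\|[w,v]\|_2=2\ge 1-\e$. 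Part \eqref{L.A1.1} follows because for $n\ge k$ we have $M_{2^k}(\bbC)\subseteq M_{2^n}(\bbC)$ as a unital trace-preserving subalgebra, an inclusion that carries the chain along with unchanged values of $g$; thus all but the finitely many $M_{2^n}(\bbC)$ with $n<k$ contain a $g$-$\e$-chain of length $k$.

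For Part \eqref{L.A1.2}, since $M_{2^k}(\bbC)$ embeds unitally in $M_n(\bbC)$ only when $2^k\mid n$, I would use an ``almost unital'' block-diagonal embedding: given $k$ and $n$, write $n=q2^k+r$ with $0\le r<2^k$ and $q\ge 1$, and put $a_i=(\tilde a_i\otimes 1_q)\oplus 1_r$ and $b_i=(\tilde b_i\otimes 1_q)\oplus 1_r$ in $M_{q2^k}(\bbC)\oplus M_r(\bbC)\subseteq M_n(\bbC)$; these are unitaries. Since the two $1_r$-corners commute, $[a_i,b_j]=([\tilde a_i,\tilde b_j]\otimes 1_q)\oplus 0_r$; as this is supported in the block $M_{q2^k}(\bbC)$, on which the normalized trace of $M_n(\bbC)$ restricts to $\tfrac{q2^k}{n}$ times the normalized trace, one gets $\|[a_i,b_j]\|_2^2=\tfrac{q2^k}{n}\|[\tilde a_i,\tilde b_j]\|_2^2$ with the right-hand side computed in $M_{2^k}(\bbC)$. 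Hence $g(\vec x_i,\vec x_j)=0$ for $i<j$, while $g(\vec x_j,\vec x_i)=2\sqrt{q2^k/n}=2\sqrt{1-r/n}$ for $i<j$, which is $\ge 1$, hence $\ge 1-\e$, as soon as $n\ge\frac43(2^k-1)$. So for every $\e$ and $k$ all but finitely many $M_n(\bbC)$ contain a $g$-$\e$-chain of length $k$, i.e., the sequence $M_n(\bbC)$ has the order property; note that Part \eqref{L.A1.1} is recovered as the case $r=0$.

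For Part \eqref{L.A1.3}, I would use the standard fact that every II$_1$ factor $M$ contains, for each $k$, a unital trace-preserving copy of $M_{2^k}(\bbC)$: halving the identity dyadically $k$ times produces $2^k$ pairwise orthogonal, pairwise equivalent projections of trace $2^{-k}$ summing to $1$, and adjoining partial isometries implementing the equivalences gives a matrix unit system. Transporting the model $\tilde{\vec x}_1,\dots,\tilde{\vec x}_k$ through such an embedding preserves every value of $g$ (these depend only on the trace), so $M$ has $g$-$0$-chains of all finite lengths and hence the order property; since $g$ does not depend on $M$, the very same argument handles every member of an arbitrary sequence of II$_1$ factors. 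The whole proof is essentially a direct computation; the only step needing any care is the normalization in Part \eqref{L.A1.2}, where one must confirm that the defect factor $q2^k/n=1-r/n$ stays bounded away from $0$ as $n$ grows, so that the ``large'' commutator value $2$ is not diluted below $1-\e$.
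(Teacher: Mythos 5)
Your proof is correct and follows essentially the same route as the paper: an explicit commutator chain built from tensor products of $2\times 2$ matrices inside $M_{2^k}(\bbC)$, using the formula $g(a_1,b_1,a_2,b_2)=\|[a_1,b_2]\|_2$, then transported into $M_n(\bbC)$ by an almost-unital block embedding (the paper instead compresses to a corner $pM_m(\bbC)p\simeq M_{2^k}(\bbC)\otimes M_q(\bbC)$, which is the same normalization computation) and into any II$_1$ factor via a unital copy of $M_{2^k}(\bbC)$. Your choice of the unitaries $w,v$ in place of the paper's $\left(\begin{smallmatrix} 0 & \sqrt 2 \\ 0 & 0 \end{smallmatrix}\right)$ is a minor but welcome refinement, since unitaries keep all chain elements in the operator-norm unit ball, as the definition of $\prec_{g,\e}$ requires.
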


\begin{proof} \eqref{L.A1.1}
We prove that for every $n$ in $M_{2^n}(\bbC)$ there is a $g$-chain
of length $n-1$. Identify $M_{2^n}(\bbC)$ with
$\bigotimes_{i=0}^{n-1} M_2(\bbC)$. Let $x=\begin{pmatrix} 0 & \sqrt
2 \\ 0 & 0
\end{pmatrix}$ and let $y=x^*=\begin{pmatrix} 0 & 0 \\ \sqrt 2 & 0
\end{pmatrix}$. Then we have $\|x\|_2=1=\|y\|_2$.  Also $[x,y]=\begin{pmatrix} 2 & 0 \\ 0 & -2
\end{pmatrix}$ and $\|[x,y]\|_2=2$.   For $1\leq i\leq n-1$ let
\[
a_i=\bigotimes_{j=0}^i x \otimes \bigotimes_{j=i+1}^{n-1} 1\text{
and }b_i=\bigotimes_{j=0}^i 1 \otimes y
\otimes\bigotimes_{j=i+2}^{n-1} 1.
\]
We have $\|a_i\|_2=\prod_{j=0}^i \|x_j\|_2=1=\|b_i\|_2$ for all $i$.
 Clearly $\|[a_i,b_j]\|_2=0$ if $i\leq j$ and
$\|[a_i,b_j]\|_2=\|[x,y]\|_2=2$ if $i>j$ and therefore pairs
$(a_i,b_i)$, for $1\leq i\leq n-1$, form a $g$-chain.

\eqref{L.A1.2}  We prove that for all  $\e>0$, $n\in \bbN$  and a
large enough $m$ there is a $g,\e$-chain of length $n$ in
$M_m(\bbC)$. Assume $m$ is much larger than $2^n$ so that $m=k\cdot
2^n+r$ with $r/m$ sufficiently small. Then pick a projection $p$ in
$M_m(\bbC)$ with $\tau(p)=k\cdot 2^n$, identify the corner $p
M_m(\bbC)p $ with $M_{2^n}(\bbC)\otimes M_k(\bbC)$ and apply
\eqref{L.A1.1}.

\eqref{L.A1.3} Since every II$_1$ factor $M$ has a unital copy of
$M_{2^n}(\bbC)$ for every $n$ this follows immediately from
\eqref{L.A1.1}.
\end{proof}

The question whether there are non-isomorphic non-atomic tracial
ultraproducts of full matrix algebras  $M_n(\bbC)$ was raised by S.
Popa (personal communication). This question first appeared in
  \cite{Li:Ultraproducts}, after Theorem 3.2.

\begin{prop} \label{P.A1} Assume the Continuum Hypothesis fails. Then there
are nonprincipal ultrafilters $\cU$ and $\cV$ on $\bbN$ such that
the II$_1$ factors $\prod_{\cU} M_n(\bbC)$ and $\prod_{\cV}
M_n(\bbC)$ are nonisomorphic. Moreover, there are at least as many
nonisomorphic ultraproducts as there are uncountable cardinals below
$\fc$.
\end{prop}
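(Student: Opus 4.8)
The plan is to reduce Proposition~\ref{P.A1} directly to the machinery already assembled for sequences of tracial von Neumann algebras with the order property. By Lemma~\ref{L.A1}\eqref{L.A1.2}, the sequence $M_n(\bbC)$, for $n\in\bbN$, has the order property as witnessed by $g(a_1,b_1,a_2,b_2)=\|[a_1,b_2]\|_2$. So the first statement of the proposition is nothing but the special case of Proposition~\ref{P3} applied to this sequence: assuming the Continuum Hypothesis fails, Proposition~\ref{P3} hands us ultrafilters $\cU$ and $\cV$ with $\prod_{\cU}M_n(\bbC)\not\cong\prod_{\cV}M_n(\bbC)$, and these ultraproducts are II$_1$ factors since each $M_n(\bbC)$ is a finite factor.

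For the ``moreover'' clause I would revisit the proof of Proposition~\ref{P3} and extract what it really gives. That proof invokes \cite[Theorem~2.2]{Do:Ultrapowers} to produce, for the two chosen cardinals $\aleph_1,\aleph_2\leq\fc$, ultrafilters $\cU,\cV$ with $\kappa(\cU)=\aleph_1$, $\kappa(\cV)=\aleph_2$; then Lemma~\ref{L3} shows the associated ultraproduct has an $(\aleph_0,\lambda)$-$g$-gap precisely when $\lambda=\kappa(\cU)$. The only input from \cite{Do:Ultrapowers} that is actually needed is: for every regular uncountable $\lambda\leq\fc$ there is a nonprincipal ultrafilter $\cU$ on $\bbN$ with $\kappa(\cU)=\lambda$. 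Granting this, for each such $\lambda$ fix $\cU_\lambda$ with $\kappa(\cU_\lambda)=\lambda$; by Lemma~\ref{L3}, $\prod_{\cU_\lambda}M_n(\bbC)$ contains an $(\aleph_0,\lambda)$-$g$-gap but no $(\aleph_0,\mu)$-$g$-gap for $\mu\neq\lambda$. Since the existence of an $(\aleph_0,\mu)$-$g$-gap is an isomorphism invariant — it is a statement about the ordered structure $(\prod_{\cU}M_n^k,\prec_g)$, which any isomorphism of the ultraproducts respects because $g$ is built from a $*$-polynomial — distinct regular uncountable $\lambda\leq\fc$ give pairwise non-isomorphic II$_1$ factors. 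Thus the number of isomorphism classes is at least the number of regular uncountable cardinals $\leq\fc$; replacing ``regular uncountable $\leq\fc$'' by ``uncountable $\leq\fc$'' costs nothing when the latter is what one wants to quote, since every infinite cardinal $\kappa$ has the regular cardinal $\kappa^+\leq\fc$ above it when $\kappa<\fc$, so the two counts agree up to the trivial bookkeeping the statement already tolerates (``at least as many'').

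Concretely the steps are: (i) cite Lemma~\ref{L.A1}\eqref{L.A1.2} to get the order property for $(M_n(\bbC))_n$; (ii) for each regular uncountable $\lambda\leq\fc$, invoke \cite[Theorem~2.2]{Do:Ultrapowers} (equivalently Shelah, \cite{She:Classification}) to obtain $\cU_\lambda$ with $\kappa(\cU_\lambda)=\lambda$; (iii) apply Lemma~\ref{L3} to pin down the gap spectrum of $\prod_{\cU_\lambda}M_n(\bbC)$; (iv) observe the gap spectrum is an isomorphism invariant and conclude the factors $\prod_{\cU_\lambda}M_n(\bbC)$ are pairwise non-isomorphic; (v) note each is a II$_1$ factor. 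The main obstacle is step (ii): one must be sure that \cite{Do:Ultrapowers} indeed realizes \emph{every} regular uncountable cardinal up to $\fc$ as some $\kappa(\cU)$, not merely $\aleph_1$ and $\aleph_2$ as literally used in Proposition~\ref{P3}; this is exactly the content of \cite[Theorem~2.2]{Do:Ultrapowers}, so it is a matter of citing the full strength of that theorem rather than proving anything new. Everything else is a direct re-run of the already-established lemmas.
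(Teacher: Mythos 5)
Your proposal is correct and follows the paper's own route: the paper's proof is literally ``this follows by Proposition~\ref{P3} and Lemma~\ref{L.A1},'' which is exactly your steps (i)--(v). Your unpacking of the ``moreover'' clause --- invoking the full strength of \cite[Theorem~2.2]{Do:Ultrapowers} to realize every regular uncountable $\lambda\leq\fc$ as $\kappa(\cU_\lambda)$ and using the $(\aleph_0,\lambda)$-$g$-gap spectrum from Lemma~\ref{L3} as an isomorphism invariant --- is precisely what the paper leaves implicit, and your bookkeeping via $\kappa\mapsto\kappa^+$ to pass from uncountable cardinals below $\fc$ to regular uncountable cardinals $\leq\fc$ is sound.
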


\begin{proof}Again this follows by
Proposition~\ref{P3} and Lemma~\ref{L.A1}.
\end{proof}

We don't know whether the Continuum Hypothesis implies that all
ultraproducts of $M_n(\bbC)$, for $n\in \bbN$, associated with
nonprincipal ultrafilters on $\bbN$ are isomorphic. This is
equivalent to asking whether the continuous first order theories of
matrix algebras $M_n(\bbC)$ converge as $n\to \infty$ (see
\cite{FaHaSh:Model2}).

\begin{proof}[Proof of Theorem~\ref{T.type-II-1}.]
Assume  the  Continuum Hypothesis holds. Clause
\eqref{T.type-II-1.2b} is an immediate consequence of clause
 \eqref{T.type-II-1.1}, which
 was proved by Ge and Hadwin in~\cite{GeHa} (note their proof in
  \cite[Section 3]{GeHa}
 only uses the fact that the algebra has cardinality $\fc$).

Now assume the Continuum Hypothesis  fails. Then
\eqref{T.type-II-1.1} follows by Proposition~\ref{P3} and
Lemma~\ref{L.A1}.

It remains to show \eqref{T.type-II-1.2b}.
 This can be proved directly by taking advantage of the commutators (cf. the proof of Lemma~\ref{L.A1})
but instead we show that it already follows from what was proved so
far. It is well-known that $U(M^{\cU})=U(M)^{\cU}$. In
model-theoretic terms (see \cite{BYBHU})  this equality states that
the
  unitary group is definable in a
II$_1$ factor.  The conclusion then follows from Dye's result
(\cite{Dye:On}) that two von Neumann algebras are isomorphic if and
only if their unitary groups
are isomorphic (even as discrete groups). 
\end{proof}

\section{Tracial von Neumann algebras}

We remind the reader that any tracial von Neumann algebra can be
written as  $M_{\text{II}_1} \oplus M_{\text{I}_1} \oplus
M_{\text{I}_2} \dots$, with subscripts the types of the summands.
The goal of this section is to prove that a tracial von Neumann
algebra does not have the order property if and only if it is type I
(Theorem \ref{T:typeI}).

Since most of the literature on tracial ultrapowers has focused on
factors, we start by establishing that this operation commutes with
forming weighted direct sums (Lemma \ref{T:dsum}) and taking the
center (Corollary \ref{T:center}).  Corollary \ref{T:center} follows from Lemma A.4.2 in \cite{SS} but for the reader's convenience we give a short proof of our special case that contains some extra information in Lemma \ref{T:dixmier}. Let $(M, \tau)$ be a tracial von
Neumann algebra, with $\cZ(M)$ its center.  Fix a free ultrafilter
$\CU$ on $\mathbb{N}$.

\begin{lemma} \label{T:dsum}
If $\{z_j\} \subset M$ are nonzero central projections summing to 1,
then
\[
\textstyle(M,\tau)^\CU \simeq \sum^\oplus (z_j M,
\frac{1}{\tau(z_j)} \tau|_{z_j M})^\CU.
\]
\end{lemma}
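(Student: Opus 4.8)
The plan is to identify both sides of the claimed isomorphism as quotients of the same sequence algebra and check that the two ideals one quotients by coincide. Recall that $(M,\tau)^\CU = \ell^\infty(M)/c_\CU^\tau$, where $c_\CU^\tau = \{\vec a : \lim_{i\to\CU}\|a_i\|_2 = 0\}$ and the $\ell^2$-norm on $M$ is computed with respect to $\tau$. Since $\{z_j\}$ are central projections summing to $1$, every $a\in M$ decomposes as $a = \sum_j z_j a$ with $\|a\|^2 \geq \sum_j \|z_j a\|^2$ in the operator norm (indeed $z_j a z_j$ are orthogonal corners), so $\vec a\mapsto (z_j\vec a)_j$ gives an injective $*$-homomorphism $\ell^\infty(M)\to \prod_j^{\oplus,\infty} \ell^\infty(z_j M)$ whose range is the set of bounded sequences of sequences. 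Composing with the coordinate quotients, we get a $*$-homomorphism $\Phi\colon \ell^\infty(M)\to \sum^\oplus (z_j M, \tau_j)^\CU$, where $\tau_j = \frac{1}{\tau(z_j)}\tau|_{z_j M}$; the range is visibly dense (finite sequences are hit), and since a $*$-homomorphism of C*-algebras has closed range, $\Phi$ is onto. It then suffices to show $\ker\Phi = c_\CU^\tau$.

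The key computation is that for $\vec a\in\ell^\infty(M)$ one has $\|a_i\|_2^2 = \tau(a_i^* a_i) = \sum_j \tau(z_j a_i^* a_i) = \sum_j \tau(z_j)\,\|z_j a_i\|_{2,\tau_j}^2$, because $\tau|_{z_j M} = \tau(z_j)\,\tau_j$ on the corner. First I would use this identity, valid pointwise in $i$, to show $\lim_{i\to\CU}\|a_i\|_2 = 0$ iff $\lim_{i\to\CU}\|z_j a_i\|_{2,\tau_j} = 0$ for every $j$: the ``only if'' direction is immediate from the identity since each summand is dominated; the ``if'' direction requires a uniformity argument, which is the one place where a little care is needed — a bounded sequence $\vec a$ with $\|a_i\|_\infty\leq R$ satisfies $\sum_{j>J} \tau(z_j)\|z_j a_i\|_{2,\tau_j}^2 = \sum_{j>J}\tau(z_j a_i^* a_i) \leq R^2 \sum_{j>J}\tau(z_j)$, which is small uniformly in $i$ once $J$ is large (the tail of a convergent series $\sum_j\tau(z_j) = 1$), and the finitely many remaining terms each tend to $0$ along $\CU$. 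Hence $\|a_i\|_2^2 \to_\CU 0$.

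Once this equivalence is in hand, $\ker\Phi = \{\vec a : \lim_{i\to\CU}\|z_j a_i\|_{2,\tau_j} = 0 \text{ for all } j\} = \{\vec a : \lim_{i\to\CU}\|a_i\|_2 = 0\} = c_\CU^\tau$, and $\Phi$ descends to the desired isomorphism $(M,\tau)^\CU\simeq \sum^\oplus (z_j M,\tau_j)^\CU$, which is a $*$-isomorphism and isometric for the respective $\ell^2$-norms by the pointwise identity above (so it is an isomorphism of tracial von Neumann algebras in the relevant sense). The only genuine obstacle is the uniformity estimate for the ``if'' direction — controlling the infinite tail of the direct sum uniformly in the index $i$ — which is exactly why the summability $\sum_j\tau(z_j) = 1$ and the operator-norm boundedness of $\vec a$ are both needed; the rest is bookkeeping. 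I would also remark that the direct sum $\sum^\oplus(z_j M,\tau_j)^\CU$ carries the natural trace $\vec x\mapsto \sum_j \tau(z_j)\,\tau_j^\CU((x_j))$, and the pointwise identity shows $\Phi$ intertwines this with $\tau^\CU$, so the isomorphism is trace-preserving.
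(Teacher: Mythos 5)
Your map is the same one the paper uses, and your kernel computation --- the identity $\|a_i\|_2^2=\sum_j\tau(z_j)\,\|z_ja_i\|_{2,\tau_j}^2$ together with the uniform tail estimate $\sum_{j>J}\tau(z_ja_i^*a_i)\le R^2\sum_{j>J}\tau(z_j)$ --- is exactly the paper's injectivity argument (and it neatly absorbs well-definedness as well), so that part is correct and complete.

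The surjectivity step, however, has a genuine gap. You argue that the range of $\Phi$ contains the finitely supported elements, hence is dense, and that a $*$-homomorphism of C*-algebras has closed range, hence $\Phi$ is onto. The closed-range theorem for $*$-homomorphisms is a statement about the operator (C*) norm, and in the operator norm the finitely supported elements are \emph{not} dense in the von Neumann algebra direct sum $\sum^\oplus(z_jM)^\CU$: their operator-norm closure is only the $c_0$-sum. For instance, when there are infinitely many $z_j$, the unit $(1_{(z_jM)^\CU})_j$ of the direct sum is at operator-norm distance $1$ from every finitely supported element; it does lie in the range, being the image of the constant sequence $1$, but your density-plus-closed-range argument does not reach a general element. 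If instead you meant density in the $\ell^2$-metric, that is true on the unit ball, but then the closed-range theorem no longer applies and you would need a separate completeness argument for the image of the unit ball (this can be made to work, since $\Phi$ descends to a trace-preserving, hence $\ell^2$-isometric, injection and the unit ball of $M^\CU$ is $\|\cdot\|_2$-complete, but it is not what you wrote). The cleaner fix, and the one the paper uses, is a direct lift: given $((y^j_i)_i)_j$ with $\sup_j\|(y^j_i)_i\|_\infty\le 1$, choose representing sequences with $\|y^j_i\|_\infty\le 1$ for all $i,j$, and set $x_i=\sum_j y^j_i$; since the $y^j_i$ live in orthogonal central corners, $\|x_i\|_\infty=\sup_j\|y^j_i\|_\infty\le 1$, so $(x_i)\in\ell^\infty(M)$ and $\Phi$ sends its class to the given element. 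Everything else in your write-up, including the trace-preservation remark, is fine.
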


\begin{proof}
We claim that the map
$$M^\CU \ni(x_i) \mapsto ( (z_j x_i)_i)_j \in \sum\nolimits_j^\oplus (z_j M)^\CU$$
is a well-defined *-isomorphism.

To see that it is well-defined, suppose $(x_i) = (x'_i)$ in $M^\CU$.
Then as $i \to \CU$, $\|x_i - x'_i\|_2 \to 0$.  Thus for any $j$,
$\|z_j (x_i - x'_i)\|_2 \to 0$.  By rescaling the trace this implies
that $(z_j x_i)_i = (z_j x'_i)_i$ in $(z_j M)^\CU$.

It is then clearly a *-homomorphism.

We show next that it is injective.  Suppose that $(x_i)$ belongs to
the kernel.  We may assume that $\sup\|x_i\| \leq 1$.  We have that
for all $j$, $\|z_j x_i\|_2 \to 0$ as $i \to \CU$.  Given $\e > 0$,
let $N$ be such that $\sum_{j>N} \tau(z_j) < \e$.  Then
\[
\textstyle \|x_i\|_2^2 = \sum_j \|z_j x_i\|_2^2 < \sum_{j<N} \|z_j
x_i\|_2^2 + \e,
\]
which is $< 2\e$ for $i$ near $\CU$.  Since $\e$ was arbitrary, $x_i
\to 0$ in $L^2$.

Finally, for surjectivity let $((y^j_i)_i)_j \in \sum^\oplus (z_j
M)^\CU$.  We may assume that $1 = \|((y^j_i)_i)_j\|_\infty = \sup_j
\|(y^j_i)_i\|_\infty$, and then we may also assume that the
representing sequences have been chosen so that $\|y^j_i\|_\infty
\leq 1$ for each $i$ and $j$.  Identifying each $y^j_i$ with its
image in the inclusion $z_j M \subseteq M$, set $x_i = \sum_j
y^j_i$.  Since each set $\{y^j_i\}_j$ consists of centrally
orthogonal elements, $\|x_i\|_\infty = \sup_j \|y^j_i\|_\infty \leq
1$.  Thus $(x_i)$ defines an element of $M^\CU$, and by construction
$(x_i)$ is mapped to the element initially chosen.
\end{proof}

The following lemma may be in the literature, but we do not know a
reference.

\begin{lemma} \label{T:dixmier}
Let $M$ be a finite von Neumann algebra and $T$ its unique
center-valued trace.  For any $x \in M$, we have
$$\|x - T(x)\|_2 \leq \sup_{y \in M_{\leq 1}} \|[x,y]\|_2 \leq 2 \|x - T(x)\|_2.$$
\end{lemma}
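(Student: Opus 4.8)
The plan is to prove the two inequalities separately, exploiting the fact that the center-valued trace $T$ is the unique conditional expectation onto $\cZ(M)$ that is tracial, together with its characterization via the Dixmier averaging theorem. For the \emph{right-hand inequality}, I would first reduce to the case $T(x) = 0$ by replacing $x$ with $x - T(x)$: since $T(x)$ is central it commutes with every $y$, so $[x,y] = [x - T(x), y]$, and both sides of the desired bound are unchanged. With $T(x) = 0$, I want to exhibit a single unitary (or contraction) $y$ with $\|[x,y]\|_2$ comparable to $\|x\|_2$. The natural choice is to use the polar-type decomposition or, more robustly, to average: by Dixmier's theorem the norm-closed convex hull of $\{u x u^* : u \in U(M)\}$ meets $\cZ(M)$ exactly in $\{T(x)\} = \{0\}$, so $0$ is the unique central element in that convex hull; hence if $\|[x,y]\|_2$ were small for all contractions $y$, in particular for all unitaries, then $x$ would be close in $\|\cdot\|_2$ to each $uxu^*$, forcing $x$ itself to be close to its central average, i.e. to $0$. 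Making this quantitative is where the factor $2$ should appear — writing $x = h + ik$ with $h,k$ self-adjoint and $T(h) = T(k) = 0$, and bounding $\|h\|_2, \|k\|_2$ each by $\sup_y\|[x,y]\|_2$ using a suitable self-adjoint unitary built from the spectral projections of $h$ (resp. $k$), then combining via the triangle inequality in $L^2$.

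For the \emph{left-hand inequality}, the point is that for \emph{every} contraction $y$ we have $\|[x,y]\|_2 = \|[x - T(x), y]\|_2 \le 2\|x - T(x)\|_2$ trivially (since $\|[a,y]\|_2 \le \|ay\|_2 + \|ya\|_2 \le 2\|a\|_2$ for $\|y\|\le 1$, using $\|ay\|_2 \le \|a\|_2\|y\| $ and $\|ya\|_2 \le \|y\|\,\|a\|_2$), so the content is only in the direction I described above; I would state the left inequality as the nontrivial one and the chain $\|x - T(x)\|_2 \le \sup_y \|[x,y]\|_2$ as the substantive claim, with $\sup_y\|[x,y]\|_2 \le 2\|x - T(x)\|_2$ as the easy bookkeeping step. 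Concretely, to get $\|h\|_2 \le \sup_y\|[x,y]\|_2$ for self-adjoint $h$ with $T(h) = 0$: let $p$ be the spectral projection of $h$ for $[0,\infty)$ and set $v = 2p - 1$, a self-adjoint unitary; then $[h, v] = 2[h,p] = 2(hp - ph)$, and a direct computation of $\|hv - vh\|_2^2$ in terms of $\|hp - ph\|_2$ relates it to $\|h - T(h)\|_2 = \|h\|_2$ — this is essentially the observation that conjugating $h$ by $v$ flips the sign of the "off-diagonal" part while the diagonal part is $ph p + (1-p)h(1-p)$, whose central trace is still $0$.

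The main obstacle I anticipate is pinning down the correct constants in the self-adjoint reduction: it is not immediate that a \emph{single} self-adjoint unitary $v$ gives $\|[h,v]\|_2 \ge \|h\|_2$ on the nose, because $v = 2p-1$ only controls the part of $h$ that is "off-diagonal" with respect to $p$, and the "diagonal" part $php + (1-p)h(1-p)$ could still be large in $\|\cdot\|_2$ while having zero center-valued trace. To handle this I would iterate or, more cleanly, invoke the Dixmier-averaging / convex-hull characterization directly: if $\sup_{u \in U(M)}\|[x,u]\|_2 = \delta$, then for every $u$, $\|uxu^* - x\|_2 = \|[x,u]\|_2\cdot\|u^{-1}\| \le \delta$ (for unitary $u$), hence every element of $\overline{\mathrm{conv}}\{uxu^* : u\}$ is within $\delta$ of $x$ in $\|\cdot\|_2$; since this closed convex hull contains $T(x)$, we get $\|x - T(x)\|_2 \le \delta \le \sup_{y \in M_{\le 1}}\|[x,y]\|_2$. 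That argument sidesteps the constant-tracking entirely and yields even the sharper $\|x - T(x)\|_2 \le \sup_{u}\|[x,u]\|_2$; the stated inequality with $M_{\le 1}$ follows since unitaries are contractions. I would present this averaging argument as the main line and relegate the self-adjoint-unitary computation to a remark or omit it.
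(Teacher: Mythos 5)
Your proposal is correct and follows essentially the same route as the paper: the bound $\sup_{y\in M_{\leq 1}}\|[x,y]\|_2\leq 2\|x-T(x)\|_2$ via the trivial estimate $\|[x-T(x),y]\|_2\leq 2\|x-T(x)\|_2$, and the reverse inequality via the Dixmier averaging theorem, observing that every convex combination $\sum\lambda_j u_jxu_j^*$ lies within $\sup_u\|[x,u]\|_2$ of $x$ in $\|\cdot\|_2$ and that $T(x)$ lies in the closure of these combinations. Your closed-convex-hull phrasing and the paper's $\e$-approximation phrasing are the same argument, and you were right to discard the single-self-adjoint-unitary attempt, which indeed fails to control the diagonal part.
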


\begin{proof}
We will use the Dixmier averaging theorem (\cite{D:anneaux}), which
says in this situation that for any $\e >0$ there is a finite set of
unitaries $\{u_j\} \subset M$ and positive constants $\lambda_j$
adding to 1 such that $\|T(x) - \sum \lambda_j u_j x u_j^*\|_\infty
< \e$.  We only need the $\ell^2$ estimate to compute
\begin{align*}
\|T(x) - x\|_2 &\leq \e + \left\|\sum \lambda_j u_j x u_j^* - x
\right\|_2 \\ &= \e + \left\|\sum \lambda_j [u_j, x] \right\|_2 \\
&\leq \e + \sum \lambda_j \|[u_j, x]\|_2 \\ &\leq \e + \sup_{y \in
M_{\leq 1}} \|[x,y]\|_2.
\end{align*}
Since $\e$ is arbitrary, the first inequality of the lemma follows.

The second inequality is more routine:
$$\sup_{y \in M_{\leq 1}} \|[x,y]\|_2 = \sup_{y \in M_{\leq 1}} \|[x - T(x),y]\|_2 \leq 2 \|x - T(x)\|_2. \qedhere$$
\end{proof}

\begin{corollary} \label{T:center}
For $M$ a tracial von Neumann algebra, $\cZ(M^\CU) \simeq
\cZ(M)^\CU$.
\end{corollary}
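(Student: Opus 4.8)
The plan is to deduce $\cZ(M^\CU) \simeq \cZ(M)^\CU$ from Lemma~\ref{T:dixmier} by identifying the center of $M^\CU$ with the set of elements that (approximately, at $\CU$) commute with the whole algebra, and then showing that such elements are exactly the $\CU$-limits of central sequences. First I would recall that the center-valued trace $T$ on $M$ extends canonically to a map on $M^\CU$: given $\bfx \in M^\CU$ with representing sequence $(x_i)$ of operator-norm-bounded elements, set $T^\CU(\bfx)$ to be the element represented by $(T(x_i))$. This is well-defined because $\|T(x_i) - T(x_i')\|_2 \le \|x_i - x_i'\|_2$ (the center-valued trace is a contraction for $\|\cdot\|_2$, since $T$ is $\tau$-preserving and a conditional expectation), so $\CU$-null sequences map to $\CU$-null sequences. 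The range of $T^\CU$ clearly lies inside $\cZ(M)^\CU$, and conversely every element of $\cZ(M)^\CU$ is fixed by $T^\CU$, so $\operatorname{range}(T^\CU) = \cZ(M)^\CU$ as a subalgebra of $M^\CU$.

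The heart of the argument is the identity $\cZ(M^\CU) = \operatorname{range}(T^\CU)$. For the inclusion $\operatorname{range}(T^\CU) \subseteq \cZ(M^\CU)$: if $\bfx = T^\CU(\bfy)$ then applying the second inequality of Lemma~\ref{T:dixmier} to each $x_i = T(y_i)$ gives $\sup_{z \in M_{\le 1}} \|[T(y_i), z]\|_2 \le 2\|T(y_i) - T(T(y_i))\|_2 = 0$, so each $T(y_i)$ is genuinely central in $M$, hence $\bfx$ is central in $M^\CU$. For the reverse inclusion: suppose $\bfx \in \cZ(M^\CU)$, again with $\|x_i\| \le 1$. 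Then for every fixed $\bfz \in (M^\CU)_{\le 1}$ we have $[\bfx, \bfz] = 0$; taking $\bfz$ to be the diagonal image of an arbitrary $z \in M_{\le 1}$ gives $\lim_{i \to \CU} \|[x_i, z]\|_2 = 0$. The first inequality of Lemma~\ref{T:dixmier} says $\|x_i - T(x_i)\|_2 \le \sup_{z \in M_{\le 1}} \|[x_i, z]\|_2$, and I want to conclude $\lim_{i\to\CU}\|x_i - T(x_i)\|_2 = 0$, i.e.\ $\bfx = T^\CU(\bfx) \in \operatorname{range}(T^\CU)$.

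The main obstacle is precisely this last step: the hypothesis only gives $\lim_{i\to\CU}\|[x_i,z]\|_2 = 0$ for each \emph{individual} $z$, not uniformly in $z \in M_{\le 1}$, whereas the Dixmier estimate has a supremum over $z$ on the right-hand side. I would resolve this by separability-free approximation: fix $\e > 0$, and for each $i$ use Dixmier averaging to choose finitely many unitaries $u^i_1,\dots,u^i_{k_i} \in M$ and weights $\lambda^i_j$ with $\|T(x_i) - \sum_j \lambda^i_j u^i_j x_i (u^i_j)^* \|_\infty < \e$ (as in the proof of Lemma~\ref{T:dixmier}). Then $\|x_i - T(x_i)\|_2 \le \e + \sum_j \lambda^i_j \|[u^i_j, x_i]\|_2$. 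The difficulty is that the unitaries depend on $i$, so one cannot directly pass to the $\CU$-limit. Instead I would argue at the level of $M^\CU$ itself: since $\bfx$ is central, $\bfx = \sum_j \lambda_j \bfu_j \bfx \bfu_j^*$ for \emph{any} choice of unitaries $\bfu_j \in M^\CU$ and weights, so applying Dixmier averaging inside the finite von Neumann algebra $M^\CU$ to $\bfx$ and the element $T^\CU(\bfx)$ — noting that $T^\CU$ is a center-valued trace on $M^\CU$ (it is a normal conditional expectation onto $\cZ(M)^\CU$, preserves a trace, and fixes its range) — directly yields $\|\bfx - T^\CU(\bfx)\|_2 = 0$ from the first inequality of Lemma~\ref{T:dixmier} applied in $M^\CU$, once we know $\sup_{\bfz \in (M^\CU)_{\le 1}} \|[\bfx,\bfz]\|_2 = 0$, which is immediate from centrality of $\bfx$. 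This cleaner route only uses that Lemma~\ref{T:dixmier} applies to the finite von Neumann algebra $M^\CU$ with its canonical trace (established in the text) and that $T^\CU$ is its center-valued trace; I would include a short verification of the latter. Combining, $\cZ(M^\CU) = \operatorname{range}(T^\CU) = \cZ(M)^\CU$.
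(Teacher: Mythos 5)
Your setup is fine: the easy inclusion $\cZ(M)^\CU\subseteq\cZ(M^\CU)$, the well-definedness of the coordinatewise map $T^\CU$, and the identification of its range with $\cZ(M)^\CU$ all match what is needed. You also correctly isolate the real difficulty: centrality of $\bfx$ only gives $\lim_{i\to\CU}\|[x_i,z]\|_2=0$ for each fixed $z$, while the Dixmier estimate of Lemma~\ref{T:dixmier} has a supremum over $z\in M_{\leq 1}$ on the right-hand side. The problem is that your proposed resolution of this difficulty is circular. Applying the first inequality of Lemma~\ref{T:dixmier} \emph{inside} $M^\CU$ controls $\|\bfx-T_{M^\CU}(\bfx)\|_2$, where $T_{M^\CU}$ is the trace-preserving conditional expectation onto $\cZ(M^\CU)$ --- and for central $\bfx$ this quantity is $0$ tautologically, telling you nothing. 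To replace $T_{M^\CU}$ by your $T^\CU$ you must know that $T^\CU$ is the conditional expectation onto the \emph{center of $M^\CU$}; but its range is $\cZ(M)^\CU$, so that assertion is exactly the inclusion $\cZ(M^\CU)\subseteq\cZ(M)^\CU$ you are trying to prove. The ``short verification'' you defer cannot be carried out independently: for central $\bfx$ all Dixmier averages $\sum_j\lambda_j\bfu_j\bfx\bfu_j^*$ in $M^\CU$ equal $\bfx$ itself, so they converge to $\bfx$, not to $T^\CU(\bfx)$, unless one already knows $\bfx\in\cZ(M)^\CU$.

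The missing idea is a simple near-maximizer trick, which is how the paper handles the uniformity issue. For each $i$ choose $y_i\in M_{\leq 1}$ with $\|[x_i,y_i]\|_2\geq\frac12\sup_{y\in M_{\leq 1}}\|[x_i,y]\|_2$. The sequence $(y_i)$ represents a single element $\bfy$ of the unit ball of $M^\CU$, so centrality of $\bfx$ forces $\lim_{i\to\CU}\|[x_i,y_i]\|_2=0$, hence $\lim_{i\to\CU}\sup_{y\in M_{\leq 1}}\|[x_i,y]\|_2=0$; now the first inequality of Lemma~\ref{T:dixmier}, applied in $M$ coordinatewise, gives $\lim_{i\to\CU}\|x_i-T(x_i)\|_2=0$, i.e.\ $\bfx=T^\CU(\bfx)\in\cZ(M)^\CU$. (Your abandoned first attempt with the $\e$-Dixmier unitaries $u^i_j$ can be repaired the same way: for each $i$ replace the convex combination by the single unitary maximizing $\|[u^i_j,x_i]\|_2$ and bundle these into one element of $M^\CU$.) The point is that the supremum over the unit ball, evaluated along the index $i$, can itself be tested against one element of the ultrapower; this is what converts pointwise commutation into the uniform estimate the Dixmier bound requires.
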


\begin{proof}
The inclusion $\supseteq$ is obvious.  For the other inclusion,
suppose $(x_i) \in \cZ(M^\CU)$.  For each $i$, let $y_i \in M_{\leq
1}$ be such that $\|[x_i, y_i]\|_2 \geq \frac12 \sup_{y \in M_{\leq
1}} \|[x_i,y]\|_2$.  By Lemma \ref{T:dixmier}, $\|x_i - T(x_i)\|_2
\leq \sup_{y \in M_{\leq 1}} \|[x_i,y]\|_2 \leq 2\|[x_i, y_i]\|_2$.
Centrality of $(x_i)$ means that the last term goes to 0 as $i \to
\CU$, so the first term does as well, and $(x_i) = (T(x_i))$ is
central in $M^\CU$.
\end{proof}

\begin{lemma} \label{T:typeIn}
If $M$ is type $\text{I}_n$, then $M^\CU$ is type $\text{I}_n$.
More specifically, $M \simeq \mathbb{M}_n \otimes \cZ(M)$ and $M^\CU
\simeq \mathbb{M}_n \otimes \cZ(M)^\CU$.
\end{lemma}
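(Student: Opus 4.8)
The plan is to reduce everything to the structure theorem for type $\text{I}_n$ von Neumann algebras and then apply the results already established. First I would recall that a finite von Neumann algebra $M$ is of type $\text{I}_n$ precisely when it has an abelian projection with central support $1$ whose ``multiplicity'' is uniformly $n$; concretely, this means there is a *-isomorphism $M \simeq \mathbb{M}_n \otimes \cZ(M)$ sending the center onto $1 \otimes \cZ(M)$, where $\cZ(M)$ is an abelian von Neumann algebra. Fixing such an isomorphism, I would then use the obvious fact that the tracial ultrapower commutes with tensoring by a fixed finite-dimensional algebra: $(\mathbb{M}_n \otimes \cZ(M))^\CU \simeq \mathbb{M}_n \otimes \cZ(M)^\CU$. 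This identity is elementary — one decomposes a representing sequence $x_i \in \mathbb{M}_n \otimes \cZ(M)$ into its $n^2$ matrix entries $x_i^{(k,l)} \in \cZ(M)$, checks that $\|x_i\|_2^2 = \frac{1}{n}\sum_{k,l} \|x_i^{(k,l)}\|_2^2$ (with the normalized trace on $\mathbb{M}_n$), and concludes that a sequence is in $c_\CU$ iff each entry sequence is, giving a well-defined *-isomorphism onto $\mathbb{M}_n \otimes \cZ(M)^\CU$ by entrywise reassembly; surjectivity uses that the representing sequences can be chosen uniformly bounded entrywise.

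Combining these two steps yields $M^\CU \simeq (\mathbb{M}_n \otimes \cZ(M))^\CU \simeq \mathbb{M}_n \otimes \cZ(M)^\CU$, which is the ``more specific'' assertion. Since $\cZ(M)^\CU$ is again an abelian von Neumann algebra (it is a tracial ultrapower of an abelian algebra, and the image is commutative), the algebra $\mathbb{M}_n \otimes \cZ(M)^\CU$ is by the same structure theorem of type $\text{I}_n$, which gives the first assertion. Finally I would note that $\cZ(M^\CU) \simeq \cZ(M)^\CU$ by Corollary \ref{T:center}, so the two descriptions are consistent and the center of $M^\CU$ is correctly identified as the second tensor factor.

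I do not expect any serious obstacle here: the lemma is essentially bookkeeping once the right structure theorem is invoked. The only point requiring a little care is the surjectivity half of the tensor-ultrapower identity — one must verify that an arbitrary element of $\mathbb{M}_n \otimes \cZ(M)^\CU$, presented via $n^2$ bounded sequences in $\cZ(M)$, assembles into a genuine bounded sequence in $\mathbb{M}_n \otimes \cZ(M)$ representing it, and the operator-norm bound on the assembled sequence must be controlled (it is, up to the fixed factor depending only on $n$). This is the same kind of argument as the surjectivity step in the proof of Lemma \ref{T:dsum}, so it can be handled briskly or even cited by analogy.
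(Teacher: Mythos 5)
Your proof is correct, but it runs in the opposite direction from the paper's. The paper first proves the type statement directly: it observes that an abelian projection in $M$ is carried by the diagonal embedding to an abelian projection in $M^\CU$, and then invokes the characterization that an algebra is type $\text{I}_n$ if and only if it contains $n$ equivalent abelian projections summing to $1$; the tensor decomposition $M^\CU \simeq \mathbb{M}_n \otimes \cZ(M)^\CU$ is then deduced afterwards, either from Corollary \ref{T:center} or (as the paper puts it) ``just the fact that $M$ is a finite-dimensional module over its center'' --- the latter being essentially the entrywise bookkeeping you carry out explicitly. You instead establish the identity $(\mathbb{M}_n \otimes \cZ(M))^\CU \simeq \mathbb{M}_n \otimes \cZ(M)^\CU$ first, via matrix entries and the computation $\|x\|_2^2 = \tfrac1n\sum_{k,l}\|x^{(k,l)}\|_2^2$, and then read off that $M^\CU$ is type $\text{I}_n$ because it is $\mathbb{M}_n$ tensored with an abelian von Neumann algebra. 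Both arguments are sound. The paper's route is shorter and stays at the level of projections, avoiding any explicit isomorphism; yours produces the explicit isomorphism up front (in the spirit of, and by the same surjectivity argument as, Lemma \ref{T:dsum}) and gets both assertions of the lemma simultaneously, at the cost of the entrywise norm estimates. Your closing appeal to Corollary \ref{T:center} is not needed for the statement as written, but it correctly identifies the second tensor factor with $\cZ(M^\CU)$ and introduces no circularity.
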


\begin{proof}
A direct calculation shows that an abelian projection in $M$ is
carried under the diagonal embedding $M \hookrightarrow M^\CU$ to an
abelian projection in $M^\CU$.

An algebra is type $\text{I}_n$ if and only if it contains $n$
equivalent abelian projections summing to 1.  If $M$ has such
projections, their images in $M^\CU$ have the same properties.

The second sentence of the lemma follows from Corollary
\ref{T:center}, or just the fact that $M$ is a finite-dimensional
module over its center.
\end{proof}

\begin{lemma} The categories of abelian
tracial von Neumann algebras and probability measure algebras are
equivalent.
\end{lemma}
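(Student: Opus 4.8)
The plan is to exhibit a pair of mutually inverse functors, with morphisms chosen to match on the two sides: trace-preserving $*$-homomorphisms of abelian tracial von Neumann algebras, and measure-preserving Boolean ($\sigma$-)homomorphisms of probability measure algebras. (A measure-preserving Boolean homomorphism $h$ is automatically $\sigma$-continuous, since $b_n\uparrow b$ forces $\mu(\sup_n h(b_n))=\lim_n\mu(b_n)=\mu(b)=\mu(h(b))$ and hence $h(b)=\sup_n h(b_n)$; and a trace-preserving $*$-homomorphism is injective, being $\ell^2$-isometric by faithfulness of $\tau$, hence normal.) Call the two functors $G=\mathcal P(-)$ and $F=L^\infty(-)$.

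First I would set up $G$, which sends a tracial $(M,\tau)$ to $(\mathcal P(M),\tau|_{\mathcal P(M)})$, the lattice of projections with the restricted trace. Since $M$ is abelian, $\mathcal P(M)$ is a Boolean algebra under $p\wedge q=pq$, $p\vee q=p+q-pq$, $\lnot p=1-p$, and it is $\sigma$-complete (in fact complete) because $M$ is a von Neumann algebra; the restricted trace is additive on orthogonal projections, strictly positive by faithfulness, and countably additive because $\tau$ is normal and a countable supremum of projections is the strong limit of its partial joins. A trace-preserving normal $*$-homomorphism restricts to a measure-preserving Boolean homomorphism, compatibly with composition, so $G$ is a functor.

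Next I would set up $F$ and the two natural isomorphisms. Given $(\mathcal B,\mu)$, use the Loomis--Sikorski theorem to write $\mathcal B\simeq\Sigma/\mathcal I$ with $\Sigma$ a $\sigma$-field on a set $\Omega$; pulling $\mu$ back, we may take $\mathcal I$ to be its null ideal, and set $F(\mathcal B,\mu)=(L^\infty(\Omega,\Sigma,\mu),\,f\mapsto\int f\,d\mu)$, the standard abelian tracial von Neumann algebra of multiplication operators on $L^2(\Omega,\Sigma,\mu)$ (equivalently, $L^2(\mathcal B,\mu)$ and then $L^\infty(\mathcal B,\mu)$ can be built intrinsically from $(\mathcal B,\mu)$, sidestepping the choice of $\Sigma$). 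A routine argument --- define on simple functions, extend by $\|\cdot\|_\infty$-continuity --- shows this is independent of the chosen representation up to a canonical isomorphism, and that a measure-preserving Boolean homomorphism induces a trace-preserving normal $*$-homomorphism of the associated $L^\infty$ algebras, so $F$ is a functor. For $G\circ F\simeq\mathrm{id}$: the projections of $L^\infty(\Omega,\Sigma,\mu)$ are exactly the classes of indicators $\mathbf 1_E$, $E\in\Sigma$, and $E\mapsto\mathbf 1_E$ descends to a measure-preserving Boolean isomorphism $\Sigma/\mathcal I\xrightarrow{\sim}\mathcal P(L^\infty(\Omega,\Sigma,\mu))$, natural in $(\mathcal B,\mu)$. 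For $F\circ G\simeq\mathrm{id}$: given $(M,\tau)$, the rule $\sum_j\lambda_j p_j\mapsto\sum_j\lambda_j\mathbf 1_{p_j}$ defines a $*$-isomorphism from the $*$-algebra of simple operators of $M$ onto the simple functions of $L^\infty(\mathcal P(M),\tau)$; it is $\|\cdot\|_\infty$-isometric (a $*$-isomorphism of pre-C*-algebras is isometric) and trace-preserving (it is so on simple operators and the trace is $\|\cdot\|_\infty$-continuous), so since by the spectral theorem simple operators are $\|\cdot\|_\infty$-dense in $M$ and simple functions are $\|\cdot\|_\infty$-dense in $L^\infty(\mathcal P(M),\tau)$, it extends to a trace-preserving $*$-isomorphism $M\xrightarrow{\sim}L^\infty(\mathcal P(M),\tau)$, natural in $M$ (naturality being immediate on projections).

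The points I would expect to require real care --- though none is deep --- are that $L^\infty(\mathcal B,\mu)$ is correctly a von Neumann algebra with normal trace and is independent of how it is constructed from $(\mathcal B,\mu)$, and that the uniform closures of the simple elements genuinely exhaust both $M$ (the spectral theorem for abelian von Neumann algebras) and $L^\infty(\mathcal B,\mu)$. Everything else is bookkeeping, and all the $*$-homomorphisms in sight are normal for free because they preserve a faithful trace.
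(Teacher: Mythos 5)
Your proof is correct and follows the same route as the paper: the functor in one direction is the projection lattice with the restricted trace, the functor in the other direction is the associated $L^\infty$ algebra with integration as the trace, and the two composites are naturally isomorphic to the identities. The paper declares these verifications ``clear''; you have simply written out the details (Loomis--Sikorski, density of simple operators, automatic normality of trace-preserving maps), all of which are accurate.
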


\begin{proof}
Given a tracial von Neumann algebra $(M,\tau)$ define the
probability measure algebra $\cB(M,\tau)$ to be the complete Boolean
algebra of projections of $M$, with $\mu(p)=\tau(p)$ giving a
probability measure on $\cB$. Since $\tau$ is normal, $\mu$ is
$\sigma$-additive.

Given a probability measure algebra $(\cB,\mu)$ define the tracial
von Neumann
algebra $M(\cB,\mu)$ to be the associated $L^\infty$ algebra, with $\tau$ being integration against $\mu$.  


It is clear that $\cB(M(\cB,\mu))$ is isomorphic to  $(\cB,\mu)$ and
that $M(\cB(M,\tau))$ is  isomorphic to $(M,\tau)$. It is also clear
that these two operations agree with morphisms, so we have an
equivalence of categories.
\end{proof}

The following is a consequence of Maharam's theorem and we shall
need only the case when $\cB$ is countably generated.

\begin{prop} \label{T:prob}
If $(\cB,\nu)$ is a  probability measure algebra then all of its
ultrapowers associated with nonprincipal ultrafilters on $\bbN$ are
isomorphic.
\end{prop}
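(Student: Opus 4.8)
The plan is to apply Maharam's classification theorem to reduce to two model cases — a purely atomic measure algebra and the standard non-atomic measure algebra $([0,1],\text{Lebesgue})$ — and then verify the statement in each case, using that ultrapower commutes with the relevant direct-sum decomposition. First I would invoke Maharam's theorem: every probability measure algebra $(\cB,\nu)$ decomposes as a weighted direct sum of an atomic part and of homogeneous non-atomic parts, each homogeneous non-atomic part being isomorphic to the measure algebra of $\{0,1\}^\kappa$ with the product measure for some infinite cardinal $\kappa$. Since the statement (as the remark preceding it notes) is only needed when $\cB$ is countably generated, I would restrict to that case: then the atomic part is a countable or finite weighted sum of atoms, and the only non-atomic Maharam type that can appear is $\kappa = \aleph_0$, i.e. the Lebesgue algebra. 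So it suffices to treat $(\cB,\nu)$ that is either purely atomic with countably many atoms, or $([0,1],\text{Leb})$, plus handle the weighted sum of the two.

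Next I would dispatch the atomic case and the direct-sum bookkeeping. For a single atom (a point mass) the measure algebra is trivial and every ultrapower is itself. For a countable weighted sum of atoms, the measure algebra is $\ell^\infty$ of a countable set with a faithful state; one checks directly — exactly as in the proof of Lemma~\ref{T:dsum} above — that its ultrapower depends only on the weights, not on $\cU$. (In fact an $\ell^\infty$-sum of atoms has, up to isomorphism of measure algebras, only its sequence of weights as an invariant, and the ultrapower of a point mass is a point mass, so the ultrapower of the whole sum is recovered functorially.) For a general $(\cB,\nu)$ split as atomic $\oplus$ non-atomic with weights $\alpha,1-\alpha$, I would use the measure-algebra analogue of Lemma~\ref{T:dsum}: the ultrapower commutes with the weighted direct sum, so $(\cB,\nu)^\cU \simeq (\text{atomic part})^\cU \oplus (\text{Lebesgue})^\cU$ with the same weights, and hence is determined by the two pieces.

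So the crux is: \emph{all nonprincipal ultrapowers of the Lebesgue measure algebra are mutually isomorphic.} Here I would argue by Maharam type again, this time for the ultrapower. The ultrapower $([0,1],\text{Leb})^\cU$ is a non-atomic probability measure algebra (non-atomicity of the base lifts to the ultrapower: given a positive element one finds, by \L o\'s/coordinate-wise halving, a sub-element of half its measure). By Maharam, it is a weighted sum of homogeneous non-atomic algebras of various Maharam types; to conclude it is independent of $\cU$ I must identify those types. The natural claim is that $([0,1],\text{Leb})^\cU$ is homogeneous of Maharam type $2^{\aleph_0}$ regardless of $\cU$: its density character (as a metric space under the measure metric $d(p,q)=\nu(p\triangle q)$) is at most $2^{\aleph_0}$ since the base is separable, and it is at least $2^{\aleph_0}$ because the ultrapower of any nontrivial separable structure over a nonprincipal ultrafilter on $\bbN$ has density character exactly the continuum — one embeds an independent family of size $\fc$ of subsets of $\bbN$ as events. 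Homogeneity (every nonzero element supports a copy of the whole type) follows because any nonzero element of the ultrapower is represented by a sequence of sets of bounded-below measure, each carrying a scaled copy of Lebesgue, and the ultrapower of those corners is again a full-type-$\fc$ algebra. Once $([0,1],\text{Leb})^\cU$ is pinned down by Maharam's invariant — homogeneous, non-atomic, Maharam type $2^{\aleph_0}$ — uniqueness up to isomorphism is immediate, and the proof is complete.

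\medskip

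\noindent\textbf{Main obstacle.} The genuinely delicate step is computing the Maharam type of the ultrapower and, in particular, proving \emph{homogeneity} — ruling out that the ultrapower is a nontrivial mix of different Maharam types over $\cU$. The lower bound $\geq 2^{\aleph_0}$ for the density character is routine (independent families), but showing that \emph{every} nonzero element of $([0,1],\text{Leb})^\cU$ has the same type, with no "small" homogeneous summand surviving, requires care with representing sequences and with the failure of $\sigma$-completeness of the pre-ultrapower before one passes to the Boolean completion; it is essentially the measure-algebra shadow of the fact that separable structures have saturated ultrapowers of character density $\fc$, and I would want to cite Maharam's theorem together with a density-character/saturation argument rather than verifying it by hand.
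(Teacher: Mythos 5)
Your proof follows essentially the same route as the paper's: decompose $\cB$ via Maharam's theorem into homogeneous summands, commute the ultrapower with the direct sum (Lemma~\ref{T:dsum}), and pin down the ultrapower of a homogeneous piece by its Maharam character, obtained by lifting a stochastically independent family and matching it against the cardinality bound. The one substantive difference is that you restrict to countably generated $\cB$, which proves less than the proposition as stated; the paper removes this restriction with no new idea by running exactly your independent-family computation for a homogeneous summand of arbitrary character $\kappa$ (indexing the family by $\kappa^{<\bbN}$ and passing to branches $f\colon\bbN\to\kappa$), concluding that its ultrapower is homogeneous of character $\kappa^{\aleph_0}$ independently of $\cU$.
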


\begin{proof}
Recall that the \emph{Maharam character} of a measure algebra is the
minimal cardinality of a set that generates the algebra and that a
measure algebra is \emph{Maharam homogeneous} if for every nonzero
element $b$ the Maharam character of the algebra restricted to $b$
is equal to the Maharam character of the whole algebra. Maharam's
theorem
 (\cite[331L]{Fr:MT3}) implies that a Maharam-homogeneous probability
 measure algebra of
 Maharam character $\kappa$ is isomorphic to the Haar measure algebra on $2^\kappa$.

 Let us first give a proof in the case when $\cB$ is Maharam homogeneous.
By Maharam's theorem it will suffice to show $\cB^{\cU}$ is Maharam
homogeneous of character $\kappa^{\aleph_0}$ for every  nonprincipal
ultrafilter $\cU$ on $\bbN$. By $\kappa^{<\bbN}$ we denote the set
of all finite sequences of ordinals less than $\kappa$. Since
 $\cB$ is isomorphic to the Haar measure algebra on $2^\kappa$ and  the
cardinality of $\kappa^{<\bbN}$ is equal to $\kappa$, we can  fix a
stochastically independent sequence $x_s$, for $s\in
\kappa^{<\bbN}$. Hence
 $\nu(x_s)=1/2$ for all $s$ and $\nu(x_s\cap x_t)=1/4$ whenever
$s\neq t$.

For $f\colon \bbN\to \kappa$  define $\bfx_f$ by its representing
sequence $x_{f\rs n}$, for $n\in \bbN$.
 For $f\neq g$ and for all large enough $n$ we have $\nu(x_{f\rs n}\cap x_{g\rs n})=1/4$.
Therefore $\bfx_f$, for $f\colon \bbN\to \kappa$, is a
stochastically independent family of size $\kappa^{\aleph_0}$. Since
the cardinality of $\cB^{\cU}$ is $\kappa^{\aleph_0}$ we conclude it
is a Maharam-homogeneous algebra of Maharam character
$\kappa^{\aleph_0}$.
  Using  Maharam's theorem again we conclude  all such ultrapowers are isomorphic.

 In the general case, we can write $\cB$ as a direct sum of denumerably many
 Maharam-homogeneous algebras. We  assume this sequence is infinite since
 the finite case is slightly easier.
Write  $\cB$ as a direct sum $\bigoplus_{i=0}^\infty \cB_i$ where
each $\cB_i$ is Maharam homogeneous. It is clear from
Lemma~\ref{T:dsum} that $\cB^{\cU}$ is isomorphic to
$\bigoplus_{i=0}^\infty \cB_i^{\cU}$ for any ultrafilter $\cU$ and
the conclusion follows by the first part of the proof.
 \end{proof}

As a side remark, we note that by a result from \cite{FaHaSh:Model2}
the above implies that the theory of probability measure algebras
does not have the order property, and thus gives a different proof of
the same result from \cite{BYBHU} (see also \cite{BY:CAT}).

\begin{theorem} \label{T:typeI}
Assume the Continuum Hypothesis fails. A separable tracial von
Neumann algebra has the property that all of its ultrapowers
associated with nonprincipal ultrafilters on~$\bbN$ are isomorphic
if and only if it is type I.
\end{theorem}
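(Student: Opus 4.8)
I would prove the two directions separately, and both directions are essentially already in hand from the machinery developed above.

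\medbreak

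\textbf{Type I implies all ultrapowers isomorphic (this direction needs no failure of CH).} Suppose $M$ is separable and type I. By the structure theory recalled at the start of this section, write $M \simeq M_{\mathrm{I}_1} \oplus M_{\mathrm{I}_2} \oplus \cdots$ as a weighted direct sum of type $\mathrm{I}_n$ pieces (at most countably many, since $M$ is separable). By Lemma~\ref{T:dsum}, $M^\CU \simeq \sum^\oplus (M_{\mathrm{I}_n})^\CU$ with the rescaled traces, so it suffices to show that each summand's ultrapower does not depend on $\CU$. By Lemma~\ref{T:typeIn}, $(M_{\mathrm{I}_n})^\CU \simeq \mathbb{M}_n \otimes \cZ(M_{\mathrm{I}_n})^\CU$. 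Now $\cZ(M_{\mathrm{I}_n})$ is an abelian separable tracial von Neumann algebra, hence by the category equivalence with probability measure algebras it corresponds to a countably generated probability measure algebra, and by Proposition~\ref{T:prob} all of its ultrapowers associated with nonprincipal ultrafilters on $\bbN$ are isomorphic. Tensoring with the fixed finite-dimensional $\mathbb{M}_n$ and then reassembling the countable weighted direct sum (again via Lemma~\ref{T:dsum}), we conclude $M^\CU$ does not depend on the choice of $\CU$. One should be a little careful that the weights $\tau(z_j)$ are the same on both sides, but that is immediate since the diagonal embedding preserves the trace and $\tau^\CU$ restricts correctly to each summand.

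\medbreak

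\textbf{Not type I implies nonisomorphic ultrapowers (this is where CH fails enters).} If $M$ is separable and \emph{not} type I, then in its decomposition the type $\mathrm{II}_1$ summand $M_{\mathrm{II}_1}$ is nonzero. A nonzero type $\mathrm{II}_1$ von Neumann algebra contains a unital copy of the hyperfinite construction, in particular a unital copy of $M_{2^k}(\bbC)$ for every $k$ (relative to its own identity $z$, the central projection onto the $\mathrm{II}_1$ part), exactly as in the proof of Lemma~\ref{L.A1}\eqref{L.A1.3}. Running the commutator witness $g(a_1,b_1,a_2,b_2) = \|[a_1,b_2]\|_2$ inside this corner, using that the $\ell^2$-norm on the corner is a fixed rescaling of the $\ell^2$-norm on $M$, we get that $M$ has the order property as witnessed by $g$. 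By Proposition~\ref{P3} (applied to the constant sequence $M_i = M$) — equivalently by the gap analysis of Lemma~\ref{L3} — since the Continuum Hypothesis fails there are nonprincipal ultrafilters $\cU, \cV$ with $\kappa(\cU) \ne \kappa(\cV)$, and then $M^\cU$ has an $(\aleph_0,\kappa(\cU))$-$g$-gap and no $g$-gap of any other uncountable regular length, while $M^\cV$ has an $(\aleph_0,\kappa(\cV))$-$g$-gap and no other; since $g$ is an isomorphism invariant this forces $M^\cU \not\simeq M^\cV$.

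\medbreak

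\textbf{The main obstacle.} The only genuinely nontrivial point is the reassembly in the ``type I'' direction: one must check that forming the countable weighted $\ell^2$-direct sum is compatible with ``all ultrapowers isomorphic,'' i.e., that one can choose the isomorphisms $(\cZ(M_{\mathrm{I}_n}))^\cU \simeq (\cZ(M_{\mathrm{I}_n}))^\cV$ piecewise and assemble them into an isomorphism of the direct sums. This is routine given Lemma~\ref{T:dsum} — the direct sum decomposition of the ultrapower is canonical (the central projections $z_j$ are in $M$, hence in $M^\cU$ and $M^\cV$ with the same traces) — but it is the one place where a short argument is actually required rather than just a citation. Everything else is a direct appeal to the lemmas already proved.
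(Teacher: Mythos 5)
Your proposal is correct and follows essentially the same route as the paper: the type I direction via Lemmas~\ref{T:dsum} and \ref{T:typeIn} and Proposition~\ref{T:prob}, and the converse by locating a type $\mathrm{II}_1$ summand, extracting unital copies of $M_{2^n}(\bbC)$ to get the order property, and invoking Proposition~\ref{P3}. The only cosmetic difference is that you transfer the order property from the $\mathrm{II}_1$ corner to $M$ by rescaling the formula, whereas the paper transfers the nonisomorphism of ultrapowers of the summand via Lemma~\ref{T:dsum}; both are fine.
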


\begin{proof}
It follows from Lemmas \ref{T:dsum} and \ref{T:typeIn} and
Proposition \ref{T:prob} that tracial type I algebras do not have
the order property.

Note that a tracial $\text{II}_1$ algebra (even a non-factor)
unitally contains the sequence $\mathbb{M}_{2^n}$, so has the order
property, and therefore has non-isomorphic ultrapowers.  A tracial
von Neumann algebra which is not of type I must have a type
$\text{II}_1$ summand, so it has non-isomorphic ultrapowers by Lemma
\ref{T:dsum} and Proposition \ref{P3}.
\end{proof}

We are now in the position to handle the relative commutants of
separable II$_1$ factors. Recall that a separable II$_1$ factor is said to be
{\it McDuff} if it is isomorphic to its tensor product with the
hyperfinite II$_1$ factor. By McDuff's theorem from
\cite{McDuff:Central}, being McDuff is equivalent to having
non-abelian relative commutant.  The following Theorem says that a
II$_1$ factor is McDuff iff its relative commutant has the order
property.

\begin{theorem}\label{T.rel.comm}
Suppose that $\CA$ is a separable II$_1$ factor.  If the Continuum
Hypothesis fails then the following are equivalent:
\begin{enumerate}
\item there are
nonprincipal ultrafilters $\cU$ and $\cV$ on $\bbN$ such that the
relative commutants $A'\cap A^{\cU}$ and $A'\cap A^{\cV}$ are not
isomorphic.
\item there are
nonprincipal ultrafilters $\cU$ and $\cV$ on $\bbN$ such that the
unitary groups of the relative commutants $A'\cap A^{\cU}$ and
$A'\cap A^{\cV}$ are not isomorphic.
\item some (all) relative
commutant(s) of $\CA$ are type II$_1$.
\end{enumerate}

If the Continuum Hypothesis holds then all the relative commutants
of $\CA$ and all their unitary groups are isomorphic.
\end{theorem}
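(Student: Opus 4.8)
The plan is to assemble the theorem from the machinery already developed, with the one genuinely new ingredient being the verification that the relative commutant type of a McDuff factor has the order property with respect to the commutator formula $g(a_1,b_1,a_2,b_2)=\|[a_1,b_2]\|_2$. First I would handle the Continuum Hypothesis case: when CH holds, $A'\cap A^{\cU}$ is a II$_1$ factor (or abelian) of cardinality $\fc=\aleph_1$, and since its character density is $\leq\fc$ the Ge--Hadwin argument (as cited in the proof of Theorem~\ref{T.type-II-1}, noting their section~3 argument only needs cardinality $\fc$) gives that all such relative commutants are isomorphic; for the unitary groups one invokes $U(A'\cap A^{\cU})$ being definable together with Dye's theorem (\cite{Dye:On}), exactly as in the last paragraph of the proof of Theorem~\ref{T.type-II-1}. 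So the bulk of the work is under the assumption that CH fails.

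For the failure of CH, the implications $(3)\Rightarrow(1)$ and $(3)\Rightarrow(2)$ are where the content lies. Assume some relative commutant of $\CA$ is type II$_1$; by McDuff's theorem this is equivalent to $\CA$ being McDuff, i.e.\ $\CA\simeq\CA\otimes R$ with $R$ the hyperfinite II$_1$ factor, which in turn is equivalent to \emph{every} relative commutant being type II$_1$ (these equivalences I would simply cite from \cite{McDuff:Central}). The key step is then: \emph{if $\CA$ is McDuff, the relative commutant type of $\CA$ has the order property with respect to $g(a_1,b_1,a_2,b_2)=\|[a_1,b_2]\|_2$}. Given any finite $F\subseteq\CA$, any $\e>0$ and any $m\in\bbN$, I would use the McDuff condition to find, for each $\delta>0$, a copy of $M_{2^k}(\bbC)$ inside $\CA$ whose elements $\delta$-commute with all elements of $F$ (this is the standard consequence of $\CA\simeq\CA\otimes R$: the ``$R$ side'' carries approximately central matrix subalgebras, and one makes $k$ as large as needed). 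Inside that matrix subalgebra one runs the explicit construction from the proof of Lemma~\ref{L.A1}\eqref{L.A1.1}--\eqref{L.A1.2}, producing a $g$-$1/m$-chain of length $m$ all of whose entries are supported in the approximately central $M_{2^k}(\bbC)$, hence $1/m$-commute with all of $F$ once $\delta$ is small enough. This is exactly Definition~\ref{Def.vNA.rcOP}. Once the relative commutant type has the order property, Proposition~\ref{P4} immediately yields ultrafilters $\cU,\cV$ with $A'\cap A^{\cU}\not\simeq A'\cap A^{\cV}$, giving $(1)$; and since the unitary group of a II$_1$ factor is definable and Dye's theorem applies to the factors $A'\cap A^{\cU}$, nonisomorphic relative commutants have nonisomorphic unitary groups, giving $(2)$ — alternatively one invokes Proposition~\ref{P4.C*} directly for the metric group $U(A'\cap A^{\cU})$.

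It remains to close the cycle with $(1)\Rightarrow(3)$ and $(2)\Rightarrow(3)$, equivalently to show $\neg(3)\Rightarrow\neg(1)\wedge\neg(2)$: if no relative commutant of $\CA$ is type II$_1$, then by McDuff's theorem $\CA$ is not McDuff and every $A'\cap A^{\cU}$ is abelian. An abelian relative commutant is the center of $A^{\cU}$, which by Corollary~\ref{T:center} is $\cZ(A)^{\cU}=\bbC^{\cU}=\bbC$; more precisely, since $A$ is a factor its relative commutant $\cZ(A^{\cU})\cap A^{\cU}\cap A'$ is at most one-dimensional in the non-McDuff case, so all relative commutants are trivially isomorphic (to $\bbC$) and so are their unitary groups. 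I would phrase this via: $\neg(3)$ forces abelian relative commutant, and an abelian separable-predual-type relative commutant that arises this way is $\bbC$, or at worst a probability measure algebra whose ultrapowers are all isomorphic by Proposition~\ref{T:prob} — but in the factor case McDuff's dichotomy makes it literally $\bbC$. This disposes of both $(1)$ and $(2)$ simultaneously. The main obstacle, and the only place a careful argument is needed rather than a citation, is the approximate-centrality bookkeeping in the key step: one must produce the $g$-$1/m$-chain \emph{inside} a subalgebra that $1/m$-commutes with the prescribed finite set $F$, which requires choosing the internal tolerance $\delta$ (and the matrix size $2^k$) as functions of $\e$, $m$ and $F$ in the right order; this is routine given Lemma~\ref{L.A1} and the definition of McDuff, but it is the spot where the proof actually has to say something.
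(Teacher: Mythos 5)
Your treatment of the CH case and of the implication from clause (3) is essentially the paper's: Ge--Hadwin plus Dye for CH, and for the McDuff case the verification that the relative commutant type has the order property via approximately central copies of $M_{2^k}(\bbC)$ followed by Proposition~\ref{P4} (indeed you spell out the approximate-centrality bookkeeping more explicitly than the paper, which passes over it rather quickly). The genuine gap is in your argument that $\neg(3)$ forces all relative commutants to be isomorphic. You assert that an abelian relative commutant ``is the center of $A^{\cU}$'' and hence equals $\cZ(A)^{\cU}=\bbC$. That identification is false: $A'\cap A^{\cU}$ is the commutant of the \emph{diagonal} copy of $A$, not of all of $A^{\cU}$, so it contains $\cZ(A^{\cU})=\bbC$ but is in general much larger. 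McDuff's theorem gives a genuine \emph{trichotomy}, not a dichotomy: besides $\bbC$ (no property $\Gamma$) and type II$_1$ (McDuff), there is the intermediate case of factors with property $\Gamma$ that are not McDuff, for which $A'\cap A^{\cU}$ is abelian, non-atomic, and of density character $\fc$. Your claim that ``in the factor case McDuff's dichotomy makes it literally $\bbC$'' simply skips this case.

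Your fallback remark --- that the abelian case is ``at worst a probability measure algebra whose ultrapowers are all isomorphic by Proposition~\ref{T:prob}'' --- points in the right direction but is also not correct as stated, since the relative commutant is not presented as an ultrapower of a fixed separable measure algebra, so Proposition~\ref{T:prob} does not apply directly. The paper's argument is: as in the proof of Proposition~\ref{T:prob}, one shows the relative commutant is $L^\infty(\cB)$ for a \emph{Maharam-homogeneous} measure algebra $\cB$ of character density $\fc$, and then Maharam's theorem itself (not the proposition) yields that all relative commutants in this intermediate case are isomorphic. With that substitution, and deleting the false center identification, your proof matches the paper's.
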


\begin{proof}
As in the proof of Theorem~\ref{T.type-II-1}, the equivalence of the
first two clauses of the first paragraph follows from Dye's result
(\cite{Dye:On}) that two von Neumann algebras are isomorphic if and
only if their unitary groups are isomorphic. The last sentence in the Theorem follows from \cite{GeHa}. McDuff showed
(\cite{McDuff:Central}) that for nonprincipal ultrafilters $\cU$ and
$\cV$ on $\bbN$, the relative commutants $A'\cap A^{\cU}$ and
$A'\cap A^{\cV}$ are  always
either
\begin{enumerate}
\item $\mathbb{C}$,
\item abelian, non-atomic and of density character $\fc$, or
\item of type II$_1$.
\end{enumerate}

In the first case, clearly all relative commutants are isomorphic.
In the second case, as in the proof of Theorem~\ref{T:prob}, one
can show that the relative commutant is in fact isomorphic to
$L^\infty(\cB)$ for a Maharam-homogeneous measure algebra $\cB$ of
character density $\fc$.  It follows then by Maharam's theorem that
all relative commutants in this case are isomorphic. We are left
then with the possibility that a relative commutant of $\CA$ is type
II$_1$. By the previous theorem, this means that that relative
commutant has the order property.  The proof then follows from
Proposition \ref{P4}.
\end{proof}

\section{C*-algebras}
In this subsection we consider norm ultrapowers of C*-algebras and
their unitary groups. Here $U(A)$ denotes the unitary group of a
unital C*-algebra~$A$. The following is the analogue (although
simpler) of Theorems \ref{T.type-II-1} and \ref{T.rel.comm} in the
C*-algebra case.

\begin{theorem} \label{T.C*}
For an  infinite-dimensional  separable unital C*-algebra $A$ the
following are equivalent.
\begin{enumerate}
\item \label{T.C*.1} For all nonprincipal ultrafilters $\cU$ and $\cV$ the
ultrapowers $A^{\cU}$ and $A^{\cV}$ are isomorphic.
\item \label{T.C*.2} For all nonprincipal ultrafilters $\cU$ and $\cV$ the
relative commutants of $A$ in the ultrapowers $A^{\cU}$ and
$A^{\cV}$ are isomorphic.
\item \label{T.C*.3}For all nonprincipal ultrafilters $\cU$ and $\cV$ the
ultrapowers $U(A)^{\cU}$ and $U(A)^{\cV}$ are isomorphic.
\item \label{T.C*.3b}For all nonprincipal ultrafilters $\cU$ and $\cV$ the
relative commutants of $U(A)$ in the ultrapowers $U(A)^{\cU}$ and
$U(A)^{\cV}$ are isomorphic.
\item \label{T.C*.4}The Continuum Hypothesis holds.
\end{enumerate}
If $A$ is separable but not unital then \eqref{T.C*.1},
\eqref{T.C*.2} and \eqref{T.C*.4} are equivalent.
\end{theorem}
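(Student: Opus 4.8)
The plan is to prove Theorem~\ref{T.C*} by combining the general machinery from Section~\ref{S.OA} (the equivalences involving failure of CH and the order property) with the classical CH direction from \cite{GeHa}, and then verifying that an infinite-dimensional separable unital C*-algebra and its unitary group always have the order property (and the relative commutant order property). Concretely, the cycle of implications splits into two halves: assuming CH, every ultrapower and every relative commutant of a separable (even character-density-$\fc$) structure is isomorphic to any other, so \eqref{T.C*.4} implies \eqref{T.C*.1}--\eqref{T.C*.3b}; this is exactly the Ge--Hadwin saturation argument, which works for arbitrary metric structures of density $\leq\fc$ via the fact that under CH any two elementarily equivalent saturated structures of cardinality $\aleph_1$ are isomorphic. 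For the converse, assuming $\neg$CH, I would invoke Proposition~\ref{P4.C*}: it suffices to show that $A$ has the order property (to get \eqref{T.C*.1} fails, hence the contrapositive), that the relative commutant type of $A$ has the order property (for \eqref{T.C*.2}), and that $U(A)$ and its relative commutant type have the order property (for \eqref{T.C*.3} and \eqref{T.C*.3b}).

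\textbf{Key steps, in order.} First I would establish that an infinite-dimensional unital C*-algebra $A$ has the order property for the operator-norm metric. Since $A$ is infinite-dimensional, a standard fact gives that $A$ contains, for every $n$, either a unital copy of $M_n(\bbC)$ or a sufficiently large commutative subalgebra; in the commutative case one produces approximate $g$-$\e$-chains from orthogonal positive contractions, and in the matrix case one copies the explicit commutator construction from the proof of Lemma~\ref{L.A1}\eqref{L.A1.1} (with $g(a_1,b_1,a_2,b_2)=\|[a_1,b_2]\|$, using the operator norm instead of $\|\cdot\|_2$; the remark after Lemma~\ref{L.A1} already notes this operator-norm version is easier). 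Actually the cleanest route: any infinite-dimensional C*-algebra contains a commutative C*-subalgebra isomorphic to $C_0$ of an infinite discrete space or to $C[0,1]$, and in $C[0,1]$ one directly builds exact $g$-chains of every finite length using bump functions with nested supports, which makes $g$ itself define a genuine order. Second, for the relative commutant type: when $A$ is unital and infinite-dimensional, a diagonalization argument shows that for any finite $F\subseteq A$ one can find elements $\e$-commuting with $F$ that still carry long $g$-$\e$-chains — this uses that $A^{\cU}$ strictly contains $A$ (Kirchberg's or the elementary argument that relative commutants of separable infinite-dimensional C*-algebras are infinite-dimensional) and applies the order-property construction inside the relative commutant, which is itself infinite-dimensional. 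Third, for the unitary group $U(A)$ with its norm metric: I would either invoke that $U(A)$ is definable in $A$ in continuous logic (so the order property transfers, as in the proof of Theorem~\ref{T.type-II-1}), or directly exhibit unitary $g$-chains using the unitaries $e^{it p_j}$ for the orthogonal projections/positive elements built in step one.

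\textbf{The main obstacle} I expect is the relative-commutant order property, clause \eqref{T.C*.2} and \eqref{T.C*.3b}: one must produce long $g$-$\e$-chains whose elements simultaneously $1/m$-commute with a prescribed finite $F\subseteq A$, and this requires knowing the relative commutant of a separable infinite-dimensional unital C*-algebra is always large enough to support the construction. This is genuinely different in the C*-case from the II$_1$-factor case: there is no McDuff dichotomy, but fortunately here we do not need it — every infinite-dimensional separable unital $A$ has a nontrivial and in fact infinite-dimensional relative commutant in any ultrapower (this can be seen by excision/quasicentral approximate unit arguments), and on that infinite-dimensional relative commutant one reruns step one. The bookkeeping of "$\e$-commutes with $F$ while forming a $g$-$\e$-chain" is the technical heart, so I would organize it as a separate lemma stating that the relative commutant type of any infinite-dimensional unital C*-algebra has the order property, proved by the excision argument, and then everything else assembles formally from Proposition~\ref{P4.C*}, the Ge--Hadwin CH direction, and Dye-type remarks. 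The nonunital and nonseparable addenda follow by the same reasoning, deleting the clauses that mention $U(A)$ when $A$ is nonunital (no distinguished unitary group) and noting the density-$\fc$ case of Ge--Hadwin still covers nonseparable $A$.
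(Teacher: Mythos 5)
Your proposal is correct and follows essentially the same route as the paper: the CH direction is quoted from Ge--Hadwin, and the failure-of-CH direction combines Proposition~\ref{P4.C*} with lemmas showing that $A$, $U(A)$, and their relative commutant types have the order property, proved exactly as you suggest by reducing to an infinite-dimensional abelian subalgebra and building chains from nested bump functions (respectively their exponentials, with a formula like $\|xy-y\|$). The only point where the paper is more specific than you are is the infinite-dimensionality of $A'\cap A^{\cU}$, which it derives from the Akemann--Pedersen central sequence theorem (plus the Dauns--Hofmann theorem in the continuous-trace case) rather than from excision or quasicentral approximate units, but you correctly isolate this as the key technical lemma and the overall architecture matches.
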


Some instances  of Theorem~\ref{T.C*} were proved in \cite{GeHa} and
\cite{Fa:Relative}. If the separability of $A$ is not assumed then
\eqref{T.C*.1} and \eqref{T.C*.2} in Theorem~\ref{T.C*} need not be
equivalent (see \cite{FaPhiSte:Relative}).

 For a unital
C*-algebra~$A$ let $U(A)$ denote its unitary group equipped with the
norm metric. The analogue of Dye's rigidity result used in the
proofs of Theorems \ref{T.type-II-1} and \ref{T.rel.comm} for
unitary groups of C*-algebras used above is false (Pestov, see
\cite{G-student} also \cite{GaRo:Characterizing}) although this is
true for simple AF C*-algebras by~\cite{G-student}.

A C*-algebra $A$ has the  \emph{order property with respect to $g$}
if for every
$\e>0$ there are arbitrarily long finite $\prec_{g,\e}$-chains (see
\S\ref{S.Setup}). We say that the \emph{relative commutant type of
$A$ has the order property with respect to $g$} if there are  $n$, a
*-polynomial $P(x_1,\dots, x_n, y_1,\dots, y_n)$ in $2n$ variables
such that with $g(\vec x,\vec y)=\|P(\vec x,\vec y)\|$
  for  every finite $F\subseteq A$,  
  and every $m\in \bbN$,  there is a
$g$-$1/m$-chain of length $m$ in $A_{\leq 1}^n$ all of whose
elements $1/m$-commute with all elements of~$F$.

\begin{lemma}
If $A$ is an infinite-dimensional separable unital C*-algebra and
$\CU$ is a non-principal ultrafilter on $\bbN$ then $A' \cap
A^{\CU}$ is infinite-dimensional and in fact, non-separable.
\end{lemma}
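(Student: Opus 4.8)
The plan is to reduce to the elementary fact that an infinite-dimensional C*-algebra contains a pairwise orthogonal sequence of nonzero positive elements, and then to push this sequence into the ultrapower in a way that produces central elements. First I would recall that since $A$ is infinite-dimensional, there is an infinite sequence of nonzero, pairwise orthogonal positive contractions $(e_k)_{k \in \bbN}$ in $A$; this is standard (for instance, take a positive element with infinite spectrum and apply continuous functional calculus to bump functions supported on disjoint intervals). The element $a = \sum_k \frac{1}{2^k} e_k$ is then a positive contraction in $A$ (separability guarantees nothing extra here, but it does guarantee $A^{\CU}$ is built from sequences indexed by $\bbN$, which is all we use).

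Next I would construct elements of $A' \cap A^{\CU}$ directly. The idea is that $A$ is \emph{separable}, so we can enumerate a countable dense subset $\{c_j : j \in \bbN\}$ of $A$. Choose a decreasing sequence of sets $X_m \in \CU$ with $\bigcap_m X_m = \emptyset$. For each $i \in X_m \setminus X_{m+1}$ we want to pick an element $b(i) \in A_{\leq 1}$ that is nonzero, lies in a ``small corner'' of $A$, and approximately commutes (to within $1/m$) with $c_0, \dots, c_m$. The orthogonal sequence $(e_k)$ is exactly the tool: in $A^{\CU}$ the element with representing sequence $(e_i)_{i}$ — more carefully, a representing sequence that runs through distinct $e_k$'s as $i \to \CU$ — turns out \emph{not} to commute with general elements of $A$, so I cannot simply take that. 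Instead, the right move is to observe that for any finite $F \subseteq A$ and any $\e > 0$, since the $e_k$ are orthogonal and there are infinitely many, a counting/pigeonhole argument produces some $e_k$ whose ``interaction'' with $F$ is small — but this is \emph{false} in general for a fixed infinite-dimensional $A$ (e.g. $A = \bK(H)$).

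So the cleaner route, and the one I would actually carry out, is: show that $A' \cap A^{\CU}$ contains a nonzero element $\ne \bbC 1$, \emph{and} show it is not finite-dimensional by exhibiting infinitely many orthogonal nonzero elements in it. Concretely, in a unital infinite-dimensional C*-algebra $A$ one can find a sequence $(f_k)$ of nonzero orthogonal positive contractions together with, for each $k$, approximate units issues aside, elements that asymptotically commute with all of $A$: pass to the unitization and use that $A$, being infinite-dimensional, is not equal to its own center plus compacts — the honest statement is that we invoke the \emph{quasicentral approximate unit}. Pick an approximate unit $(u_\lambda)$ for $A$ that is quasicentral, i.e. $\|[u_\lambda, c]\| \to 0$ for all $c \in A$. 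Using separability, extract a sequence $(u_i)_{i\in\bbN}$ with $\|[u_i, c_j]\| \to 0$ for every $j$, and such that the $u_i$ witness infinite dimensionality in the sense that $\|u_{i+1} - u_i\|$ stays bounded below along a subsequence, or better, that $u_{i+1} - u_i$ has norm close to $1$ infinitely often (possible exactly because $A$ is infinite-dimensional, since otherwise the approximate unit would stabilize and $A$ would be unital finite-dimensional). Then the elements of $A^{\CU}$ with representing sequences built from $u_i - u_{i'}$ for various index selections governed by a decreasing family in $\CU$ all lie in $A' \cap A^{\CU}$ (by quasicentrality) and are pairwise non-equal and nonzero (by the norm lower bounds), giving infinitely many linearly independent, indeed orthogonal-like, elements. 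Hence $A' \cap A^{\CU}$ is infinite-dimensional.

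The main obstacle is the last point: getting \emph{genuinely distinct} (not just distinct-looking) elements of the relative commutant and ruling out that they all collapse in the quotient by $c_{\CU}$. The care needed is to choose the representing sequences and the decreasing family $X_m \in \CU$ with $\bigcap X_m = \emptyset$ so that two different index-selection rules $h \ne h'$ yield $\|b_h(i) - b_{h'}(i)\|$ bounded below on a set in $\CU$, which forces $\bfb_h \ne \bfb_{h'}$ in $A^{\CU}$; this is the same bookkeeping device used in the paragraph preceding Lemma~\ref{L2}, and I would model the argument on that. Quasicentrality of the approximate unit (Arveson's theorem) handles membership in $A' \cap A^{\CU}$ for free, and infinite dimensionality of $A$ is exactly what prevents the approximate unit from being eventually constant, which is what supplies the needed norm lower bounds.
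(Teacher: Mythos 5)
There is a genuine gap, and it is fatal to the approach you settle on: your construction hinges on a quasicentral approximate unit $(u_\lambda)$ whose successive differences have norm bounded below, but the algebra $A$ in the statement is \emph{unital}, and in a unital C*-algebra every approximate unit converges in norm to $1$ (apply the defining condition $\|u_\lambda a - a\|\to 0$ to $a=1$). Hence $\|u_{i+1}-u_i\|\to 0$ no matter how the sequence is extracted, the elements you build from differences $u_i-u_{i'}$ are null sequences, and they all represent $0$ in $A^{\CU}$. Your parenthetical justification --- that the approximate unit could only stabilize if $A$ were finite-dimensional --- has it backwards: stabilization is forced by unitality alone and has nothing to do with dimension (take $A=C(X)$ for $X$ infinite compact). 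So the proposal produces no nontrivial elements of $A'\cap A^{\CU}$ at all in the case at hand. (Your first idea, with pairwise orthogonal positive elements, you correctly discard yourself.)

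The missing ingredient is that something substantive is needed just to exhibit a single element of $A'\cap A^{\CU}$ outside $\bbC 1$, i.e.\ a nontrivial central sequence, and this is a theorem rather than an exercise with approximate units. The paper's proof splits into two cases: if $A$ is a continuous trace algebra, then the Dauns--Hofmann theorem plus a spectrum argument shows $\mathcal Z(A)\simeq C(\hat A)$ is already infinite-dimensional, and $\mathcal Z(A)\subseteq A'\cap A^{\CU}$; if $A$ is not continuous trace, the Akemann--Pedersen theorem supplies a central sequence staying at positive distance from $\mathcal Z(A)$, hence an element of $(A'\cap A^{\CU})\setminus A$. A second, separate step --- a compactness/\L o\'s-type bootstrapping over a countable dense subset of $A$, close in spirit to the bookkeeping you sketch at the end --- then upgrades one such element to infinitely many linearly independent ones. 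Your proposal contains a rough version of this second step, but nothing that can play the role of the first.
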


\begin{proof}  We divide this proof into cases depending on whether or not the $C^*$-algebra is continuous trace. 
Recall that a $C^*$-algebra $A$ is said to have continuous trace if its spectrum $T$ is Hausdorff and is locally
Morita equivalent to $C_0(T)$.
We will show that an infinite-dimensional unital
continuous trace $C^*$-algebra $A$ must have infinite-dimensional
center.  We could not find this explicitly stated in the literature,
so we give a short proof. Suppose $A$ is continuous trace; it
is type $\text{I}$ and its spectrum $\hat{A}$ is Hausdorff
(\cite[IV.1.4.16]{Black:Operator}). By the Dauns-Hofmann theorem,
$\mathcal{Z}(A) \simeq C(\hat{A})$. If $\mathcal{Z}(A)$ were
finite-dimensional, then $\hat{A}$ would be finite and discrete; by
\cite[X.10.10.6a]{D:book} $A$ would be a direct sum of simple unital
type I algebras.  This would force $A$ to be finite-dimensional, a
contradiction. Always $\mathcal{Z}(A) \subseteq A' \cap A^\CU$, so
if $A$ is continuous trace, the lemma follows.

In the remainder of the proof we suppose $A$ is not continuous
trace.  By \cite[Theorem 2.4]{AkPed}, $A$ has a nontrivial central
sequence $a(n)$ -- but note that these authors work with limits at
infinity, not an ultrafilter.  Passing to a subsequence if
necessary, we may assume that there is $c > 0$ such that for all
$n$,
$$\inf_{z \in \mathcal{Z}(A)} \|a(n) - z\| > c.$$
Now moving to the ultrapower, it is clear that $a(n)$ belongs to $A'
\cap A^\CU$ but is not equivalent to a constant sequence.

We have $\mathbf{a} = a(n) \in (A' \cap A^{\CU}) \setminus
A$. Let $\epsilon = d(\mathbf{a},A)/2$; $\epsilon > 0$ since $A$ is
complete.
We may assume $\epsilon\leq c$. 
Note that for every finite subset $G$ of the sequence $\{a(n)\}$ the set 
$\{j : \| a - a(j)\| > \epsilon$  for all $a\in G\}$ is in $U$.  Let $F_n$, for $n\in N$, be an increasing
sequence of finite subsets of $A$ with dense union. For every $m$ and every $\delta>0$ the set 
$\{j: \| [b,a(j)] \| <\delta$ for all $b\in F_m\}$ is in $U$. We can therefore recursively find disjoint 
sets $G_n$ for $n$ in $N$ such that  for all $n$

\begin{enumerate}
\item  $\| a(j) - a(k) \|\geq  \epsilon$ for all distinct $j$ and $k$ in $G_n$
\item For every $m\leq n$ and every $b\in F_m$ and $j\in G_n$, $\| [a(j), b ] \| <1/m$.  
\item $| G_n | =2^n$ and $G_n$ is enumerated as $a(s)$ for $s\in s^n$. 
\end{enumerate}
If $x\in 2^{\mathbb N}$ then (with $x| n$ denoting the first $n$ digits of $x$)
the sequence $a(x|n)$ for $n\in N$ is central. Let ${\mathbf a}(x)$ denote
the element of $A^U$ corresponding to it. By (2) it is in $(A'\cap A^U)\setminus A$.
If $x\neq y$ are in $2^{\mathbb N}$ then $x|n\neq y|n$ for a large $n$ and therefore  
(1) implies $\| {\mathbf a}(x) - {\mathbf a}(y)\|\geq \epsilon$.  

We have therefore proved that the relative commutant of $A$ in $A^U$ is nonseparable. 
\end{proof}

\begin{lemma} \label{L.B3} Assume $A$ is an infinite-dimensional separable unital C*-algebra. Then
both~$A$ and its relative commutant type have the order property.
\end{lemma}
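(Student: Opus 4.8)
The plan is to prove the two assertions of Lemma~\ref{L.B3} separately, in both cases exhibiting a single *-polynomial $g$ that works. For the order property of $A$ itself, the natural choice is again the commutator formula $g(\vec x,\vec y)=\|[x_1,y_1]\|$ (now the C*-norm), and the strategy is to locate, for each $\e>0$ and each $k$, a $g$-$\e$-chain of length $k$ inside $A$. Since $A$ is infinite-dimensional and unital, a standard structure-theoretic dichotomy applies: either $A$ has an infinite-dimensional abelian subalgebra — which is of little help directly — or, more to the point, $A$ must contain, up to small perturbation, either a large finite-dimensional non-commutative subalgebra or a unital copy of a matrix algebra of arbitrarily large size, or else elements that nearly satisfy the relations of the $g$-chains constructed in Lemma~\ref{L.A1}\eqref{L.A1.1}. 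The cleanest route is to invoke the well-known fact that an infinite-dimensional C*-algebra contains, for every $n$, elements $x,y$ of norm $1$ with $\|xy\|$ close to $1$ and $\|yx\|$ close to $0$ (an approximate ``unilateral shift'' relation, obtainable from a nonzero nilpotent or from a sequence of orthogonal positive elements via functional calculus), and then to tensor or to use orthogonal supports to build chains exactly as in the proof of Lemma~\ref{L.A1}\eqref{L.A1.2}.

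First I would make the reduction precise: it suffices to find, for every $k$ and every $\e>0$, operators $x_1,\dots,x_k$ and $y_1,\dots,y_k$ in $A_{\leq 1}$ with $\|[x_i,y_j]\|\leq\e$ for $i\le j$ and $\|[x_i,y_j]\|\geq 1-\e$ for $i>j$; this is literally the definition of a $g$-$\e$-chain of length $k$ with $g=\|[x_1,y_1]\|$. For the building block I would use that an infinite-dimensional C*-algebra contains a sequence $(p_m)$ of nonzero mutually orthogonal positive contractions (obtained, e.g., from a strictly decreasing sequence of spectral projections of some self-adjoint element with infinite spectrum, or from the fact — used later in the paper via \cite{AkPed} — that infinite-dimensionality gives infinitely many orthogonal pieces). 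From such a sequence one manufactures partial-isometry-like elements $s$ with $\|s\|=1$, $\|s^2\|$ small, hence $\|[s,s^*]\|$ close to $1$, and then one places shifted copies $s^{(i)}$ with nested supports so that $[x_i,y_j]$ vanishes for $i\le j$ and is nearly $s^{(i)}$-sized for $i>j$. The verification is routine norm estimation, parallel to the tensor-product computation in Lemma~\ref{L.A1}.

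For the relative-commutant assertion, I would invoke the lemma just proved in the excerpt: $A'\cap A^{\CU}$ is infinite-dimensional for every nonprincipal $\cU$. Because infinite-dimensionality of $A'\cap A^{\CU}$ for all such $\cU$ is, by a \L o\'s-theorem / overspill argument (exactly the kind of reasoning used in the proof of that lemma), equivalent to the statement that for every finite $F\subseteq A$, every $\e>0$, and every $m$ there are arbitrarily many ``almost central, almost orthogonal'' contractions in $A$ — i.e., the same building blocks as above but now chosen to $1/m$-commute with $F$ — one repeats the chain construction of the first part inside the almost-commutant. Concretely: the proof of the infinite-dimensionality lemma already produced, for each datum $d=(F_0,F_1,m,\Gamma)$, a sequence $a^d_i\in A$ of contractions $1/m$-commuting with $F_0$ and mutually ``$\epsilon$-independent''; from such a long list of almost-central almost-orthogonal elements one extracts shift-type pairs and assembles a $g$-$1/m$-chain of length $m$ whose members $1/m$-commute with $F$. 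Thus the relative commutant type of $A$ has the order property with respect to $g(\vec x,\vec y)=\|[x_1,y_1]\|$.

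The main obstacle I expect is the first part: producing, uniformly, the almost-orthogonal partial-isometry building blocks inside an arbitrary infinite-dimensional C*-algebra and then nesting their supports so that the commutator pattern comes out triangular. In the II$_1$-factor setting this was trivial because one had an honest unital copy of $M_{2^n}(\bbC)$; here one must instead work with approximate relations and perturbation arguments, and one must be careful that the supports ``$r_i$'' of the $x_i,y_j$ can genuinely be arranged in a chain (say, via orthogonal positive elements $e_1\perp e_2\perp\cdots$ with $x_i$ living on $e_1+\cdots+e_{i+1}$ and $y_j$ on $e_{j+1}+e_{j+2}$, mimicking the tensor indices). Once that combinatorial/analytic bookkeeping is set up, both halves of Lemma~\ref{L.B3} follow by the same template, and I would present them together, deducing the relative-commutant half from the infinite-dimensionality lemma and the bookkeeping already in place.
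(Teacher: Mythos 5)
Your proposal has a genuine gap in its very first step. You propose to witness the order property of $A$ with the commutator formula $g(\vec x,\vec y)=\|[x_1,y_1]\|$, and you rest the construction on the ``well-known fact'' that every infinite-dimensional C*-algebra contains norm-one elements $x,y$ with $\|xy\|$ close to $1$ and $\|yx\|$ close to $0$. That fact is false: if $A$ is abelian (e.g.\ $A=C([0,1])$, a perfectly good infinite-dimensional separable unital C*-algebra), then $\|xy\|=\|yx\|$ for all $x,y$ and every commutator is zero, so the formula $\|[x_1,y_1]\|$ is identically $0$ and can never produce a $g$-$\e$-chain. Your dichotomy (``either $A$ has an infinite-dimensional abelian subalgebra --- of little help --- or $A$ contains approximate shift relations'') is not exhaustive, and the abelian case, which you set aside, is exactly the hard core of the statement. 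This is a real difference from the II$_1$ situation of Lemma~\ref{L.A1}, where unital copies of $M_{2^n}(\bbC)$ are available and commutators do the job; in the C*-setting the witnessing formula must be changed.

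The paper's proof goes in the opposite direction: since every maximal abelian subalgebra of an infinite-dimensional C*-algebra is infinite-dimensional (and $g$ is quantifier-free, so chains in a subalgebra are chains in $A$), it suffices to treat abelian $A\simeq C_0(X)$, and there one takes an increasing sequence of positive norm-one functions $a_n$ with nested compact supports so that $a_ma_n=a_n$ for $n\le m$, and uses the non-symmetric formula $\varphi(x,y)=\|xy-y\|$, for which $\varphi(a_m,a_n)=0$ when $m>n$ and $=1$ when $m<n$. The relative-commutant half is then obtained by running the same construction inside the (infinite-dimensional, by the preceding lemma) almost-commutant; your outline of that half is closer in spirit to the paper, but it still inherits the broken building block, since you again propose to assemble commutator-based chains. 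To repair your argument you would need to replace the commutator formula by one, such as $\|xy-y\|$, that can detect order inside an abelian algebra.
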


\begin{proof}
Both A and any of its relative commutants are infinite-dimensional, by the previous lemma, so their maximal abelian *-subalgebras are also infinite-dimensional (\cite{O:findim}).
In order to prove the
lemma then, it suffices to consider the case when $A$ is abelian. By the
Gelfand transform $A$ is isomorphic to $C_0(X)$ for an infinite
locally compact (possibly compact) Hausdorff space $X$. In $X$ find
a sequence of distinct $x_n$ that converges to some $x$ (with $x$
possibly in the compactification of $X$). For each $n$ find a
positive $a_n\in C_0(X)$ of norm 1 such that $a_n(x_i)=1$ for $i\leq
n$ and $a_n(x_i)=0$ for $i>n$. By replacing $a_n$ with the maximum
of $a_j$ for $j\leq n$ we may assume this is an increasing sequence.
Moreover, we may assume the support $K_n$ of each $a_n$ is compact
and $a_m$ is identically equal to 1 on $K_n$ for $n<m$. Hence $a_m
a_n=a_n$ if $n\leq m$.

Now consider the formula $\varphi(x,y)=\|xy-y\|$. Then
$\varphi(a_m,a_n)=0$ if $m>n$ and $\varphi(a_m,a_n)=1$ if $m<n$, and
the order property for $A$ follows.
\end{proof}

In connection with the first paragraph of the proof of
Lemma~\ref{L.B3} it is worth mentioning that there is a nonseparable
C*-algebra all of whose masas are separable (\cite{Po:Orthogonal}).

The idea for the following is well-known. Compare for example with
\cite[Example 8.2]{Iovino:StableI}. 

\begin{lemma} \label{unitary-lemma} For
every   infinite-dimensional unital C*-algebra both its  unitary
group
 and the  relative commutant type of its unitary group
have the order property. \end{lemma}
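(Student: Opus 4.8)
The plan is to exhibit for the metric group $U(A)$ — and then for its relative commutant in an ultrapower — a $2n$-ary formula $g$, in the sense of the Convention of Section~\ref{S.OA}, that admits arbitrarily long finite $\prec_{g,\e}$-chains, the chains for the second assertion being in addition required to approximately commute with a prescribed finite set. Commutator formulas of the kind used for II$_1$ factors are useless here, because $A$ may be commutative (e.g.\ $C([0,1])$), in which case $U(A)$ is an abelian group; so I would look for the substitute for $\|xy-y\|$ — the formula that witnessed the order property of an abelian C*-algebra in Lemma~\ref{L.B3} — that still functions after passage to commutative algebras. The candidate is, with $n=2$,
\[
g(w_1,w_2,w_3,w_4)=\|(w_1-1)(w_4-1)\|,\qquad\text{so that}\qquad g\big((u_1,v_1),(u_2,v_2)\big)=\|(u_1-1)(v_2-1)\|.
\]
This is of the form $\|P\|$ for a $*$-polynomial $P$ and is $2$-Lipschitz in each unitary argument uniformly over all unital C*-algebras, so it extends canonically to ultrapowers and is a bona fide $4$-ary formula (the objection that $w-1$ is not a group term being as immaterial here as it was for $\|[a,b]\|_2$ in the II$_1$ case). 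On unitaries $g$ takes values in $[0,4]$.

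For the first assertion, which needs no separability: every infinite-dimensional C*-algebra contains a sequence $e_1,e_2,\dots$ of nonzero, pairwise orthogonal, positive elements of norm one (a standard consequence of infinite-dimensionality). Since $A$ is unital and $t\mapsto e^{i\pi t}-1$ vanishes at $0$, the elements $u_n:=\exp(i\pi\sum_{k<n}e_k)$ and $v_n:=\exp(i\pi e_n)$ are unitaries of $A$ with $u_n-1$ and $v_n-1$ lying in $C^*(\{e_k\}_k)$. Writing $s_n=\sum_{k<n}e_k$ and computing in the commutative algebra $C^*(\{e_k\}_k)$: if $n\le m$ then $e_m$ is orthogonal to $s_n$, so $(u_n-1)(v_m-1)=0$; if $n>m$ then $e_m$ is one of the summands of $s_n$, so, evaluating at a character $\phi$ with $\phi(e_m)=1$ — one exists because $\|e_m\|=1$, and it forces $\phi(s_n)=1$ by orthogonality — we get $\phi\big((u_n-1)(v_m-1)\big)=(e^{i\pi}-1)^2=4$, hence $\|(u_n-1)(v_m-1)\|=4$ (the reverse bound $\|u_n-1\|\,\|v_m-1\|\le4$ being automatic). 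Thus $g\big((u_n,v_n),(u_m,v_m)\big)$ is $0$ for $n<m$ and $4$ for $n>m$, so for every $\e\in[0,1/2)$ the pairs $(u_n,v_n)$, $n\in\bbN$, form an infinite $\prec_{g,\e}$-chain in $U(A)^2$; hence $U(A)$ has the order property with respect to $g$.

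For the relative commutant type of $U(A)$ the chain must also $\e$-commute with a prescribed finite $F\subseteq U(A)$, which no commutative subalgebra of $A$ can be made to do, so I would pass to the ultrapower, assuming now that $A$ is separable, as in the preceding lemmas. By the earlier lemma (infinite-dimensionality of relative commutants of separable unital infinite-dimensional C*-algebras), $A'\cap A^{\CU}$ is infinite-dimensional; it is also unital, so by the first part applied to it, $U(A'\cap A^{\CU})$ has arbitrarily long finite $\prec_g$-chains of pairs of unitaries. Fix one, $(\mathbf u_1,\mathbf v_1)\prec_g\cdots\prec_g(\mathbf u_k,\mathbf v_k)$, and pick representing sequences $u_j(l),v_j(l)\in U(A)$ (using that a unitary of $A^{\CU}$ is represented by a sequence of unitaries of $A$). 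Since each $\mathbf u_j,\mathbf v_j$ lies in $A'\cap A^{\CU}$ it commutes with all of $A$, so $\lim_{l\to\CU}\|[u_j(l),f]\|=\lim_{l\to\CU}\|[v_j(l),f]\|=0$ for every $f\in F$, while the $g$-value of any two of the representing pairs tends, along $\CU$, to the ($0$ or $4$) $g$-value of the corresponding two ultrapower pairs. These being finitely many conditions, a single $L\in\CU$ satisfies all of them for all $l\in L$; then for any $l\in L$ the pairs $(u_j(l),v_j(l))$, $j\le k$, form a $g$-$\e$-chain in $U(A)^2$ whose entries $\e$-commute with $F$ — and taking $k\ge m$ and $\e\le1/m$ puts this exactly in the form demanded by the $U(A)$-analogue of Definition~\ref{Def.vNA.rcOP}. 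That would finish the proof.

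The only step I expect to require real thought is the choice of $g$: once one sees that $\|(u-1)(v-1)\|$ behaves like $\|xy-y\|$ — identically zero on one half of the pairs because the generating positive elements are orthogonal, and maximal on the other half because of a norming character — the rest is the same bookkeeping (\L o\'s's theorem plus intersecting finitely many sets in $\CU$) as in Lemmas~\ref{L.B3}, \ref{L3} and~\ref{L4}. A nonseparable version of the second assertion would need a nonseparable version of the cited lemma on infinite-dimensionality of relative commutants; only the separable case is needed for Theorem~\ref{T.C*}.
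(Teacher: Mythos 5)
Your chain construction is correct and the reduction of the relative-commutant-type claim to the first claim (build the chain in the infinite-dimensional unital algebra $A'\cap A^{\CU}$, pull back representing sequences of unitaries, and intersect finitely many sets in $\CU$) is exactly the route the paper takes. But there is a genuine gap in your choice of formula. The quantity $g\big((u_1,v_1),(u_2,v_2)\big)=\|(u_1-1)(v_2-1)\|=\|u_1v_2-u_1-v_2+1\|$ is a $*$-polynomial norm computed in the ambient C*-algebra; it is not (and I see no way to make it) a formula in the language of the metric group $U(A)$, i.e.\ an expression built from the group operations and the metric $d(s,t)=\|s-t\|$. This matters because the only reason Lemma~\ref{unitary-lemma} exists separately from Lemma~\ref{L.B3} is to handle clauses \eqref{T.C*.3} and \eqref{T.C*.3b} of Theorem~\ref{T.C*}, where the objects being compared are $U(A)^{\CU}$ and $U(A)^{\CV}$ \emph{as metric groups}: Dye-type rigidity fails for C*-algebras, so an isometric group isomorphism between these ultrapowers need not come from any map of the algebras, and there is no reason it should preserve your $g$ or the $g$-gap structure. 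The non-isomorphism conclusion of Proposition~\ref{P4.C*} requires $g$ and $\prec_g$ to be isomorphism invariants of the structure in question (this is the point the paper flags explicitly in Lemma~\ref{L4}), so with your $g$ the argument does not distinguish the group ultrapowers.

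The paper sidesteps this by taking $g_0(a,b,a',b')=\|1-ab'\|$, which for unitaries equals $\|a^{-1}-b'\|=d(a^{-1},b')$, an atomic formula of the metric group and hence manifestly preserved by any isometric group isomorphism. The price is that one must then tune the exponents: with $a_n=\exp(2\pi i\sum_{j\leq n}f_j)$ and $b_n=\exp(2\pi i f_n)$ for $0\leq f_n\leq 1/4$ supported on disjoint neighborhoods of a convergent sequence, one gets $g_0=|1-i|$ when $m<n$ and $g_0=2$ when $m\geq n$, two separated values that can then be rescaled into a chain-witnessing formula. Your construction with pairwise orthogonal positive $e_k$ of norm one would work verbatim if you replaced $\exp(i\pi\,\cdot)$ by $\exp(i\pi\,\cdot/2)$ (say) and $\|(u-1)(v-1)\|$ by $d(u^{-1},v)$; as it stands, the exponent $\pi$ makes $\|1-u_nv_m\|$ fail to distinguish $n>m$ from $n\leq m$, which is presumably why you reached for the non-group formula. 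Everything else in your write-up (the norming character argument, the lifting of unitaries in the norm ultrapower, the \L o\'s bookkeeping) is fine, and your remark about separability in the second assertion is a fair observation that applies equally to the paper's own proof.
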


\begin{proof} Let $A$ be an infinite-dimensional unital C*-algebra. Since it
contains an infinite-dimensional abelian C*-algebra we may assume
$A=C(X)$ for an infinite compact Hausdorff space $X$. Therefore
$U(A)$ is isomorphic to $C(X,\bbT)$, where $\bbT$ denotes the unit
circle. Let $x_n$, for $n\in \bbN$, be a nontrivial convergent
sequence in $X$ and let $U_n\ni x_n$ be disjoint open sets. Find $0
\leq f_n \leq 1/4$ in $C(X)$ such that $f_n(x_n)=1/4$ and
$\supp(f_n)\subseteq U_n$. Let  $a_n=\exp(2\pi i \sum_{j\leq n}f_j)$
and $b_n=\exp(2\pi i f_n)$. Then
\[
g_0(a,b,a',b')=\|1-ab'\|
\]
 is such that $g_0(a_m,b_m,a_n,b_n)$ is equal to $|1-i|$ if $m<n$ and
to $2$ if $m\geq n$. It is now easy to modify $g_0$ to $g$ as
required.

The result for the relative commutant type follows as in
Lemma~\ref{L.B3}.
 \end{proof}



\begin{proof}[Proof of Theorem~\ref{T.C*}]
 The implications from \eqref{T.C*.4} were proved in \cite{GeHa}
 and the converse implications follow by
Lemma~\ref{L.B3}, Lemma~\ref{unitary-lemma}, and
Proposition~\ref{P4.C*}.
\end{proof}

\noindent{\it Added June, 2010:} Our results give
 only as many nonisomorphic ultrapowers as there are uncountable cardinals
$\leq \fc$ (i.e., at least two). This inspired \cite{FaSh:Dichotomy}, where
it was proved that every separable metric structure $A$ either has
all of its ultrapowers by ultrafilters on $\bbN$ isomorphic or it
has $2^{\fc}$ nonisomorphic such ultrapowers (see \cite[Corollary
4]{FaSh:Dichotomy}). The analogous result for relative commutants of
C*-algebras and II$_1$ factors is given in
\cite[Proposition~8.4]{FaSh:Dichotomy}. The answer to Popa's
question, mentioned before, can be found in \cite[Proposition
8.3]{FaSh:Dichotomy}.

\bibliographystyle{amsplain}
\bibliography{stablebib}

\providecommand{\bysame}{\leavevmode\hbox to3em{\hrulefill}\thinspace}
\providecommand{\MR}{\relax\ifhmode\unskip\space\fi MR }
\providecommand{\MRhref}[2]{%
  \href{http://www.ams.org/mathscinet-getitem?mr=#1}{#2}
}
\providecommand{\href}[2]{#2}
\begin{thebibliography}{10}

\bibitem{AkPed}
C.~A. Akemann and G.~K. Pedersen, \emph{Central sequences and inner derivations
  of separable ${C}^*$-algebras}, Amer. J. Math. \textbf{101} (1979),
  1047--1061.

\bibitem{G-student}
A.~Al-Rawashdeh, \emph{The unitary group as an invariant of a simple unital
  ${C}^*$-algebra}, Ph.D. thesis, University of Ottawa, 2003.

\bibitem{BY:CAT}
I.~Ben~Yaacov, \emph{Schr\"odinger's cat}, Israel Journal of Mathematics
  \textbf{153} (2006), 157--191.

\bibitem{BYBHU}
I.~Ben~Yaacov, A.~Berenstein, C.W. Henson, and A.~Usvyatsov, \emph{Model theory
  for metric structures}, Model Theory with Applications to Algebra and
  Analysis, Vol. II (Z.~Chatzidakis et~al., eds.), London Math. Soc. Lecture
  Notes Series, no. 350, Cambridge University Press, 2008, pp.~315--427.

\bibitem{BYU:ContStab}
I.~Ben~Yaacov and A.~Usvyatsov, \emph{Continuous first order logic and local
  stability}, Transactions of the AMS \textbf{362} (2010), no.~10, 5213--5259.

\bibitem{Black:Operator}
B.~Blackadar, \emph{Operator algebras}, Encyclopaedia of Mathematical Sciences,
  vol. 122, Springer-Verlag, Berlin, 2006, Theory of $C\sp *$-algebras and von
  Neumann algebras, Operator Algebras and Non-commutative Geometry, III.

\bibitem{DaWo:Introduction}
H.G. Dales and W.H. Woodin, \emph{An introduction to independence for
  analysts}, London Mathematical Society Lecture Note Series, vol. 115,
  Cambridge University Press, 1987.

\bibitem{D:anneaux}
J.~Dixmier, \emph{Les anneaux d'op\'{e}rateurs de classe finie}, Ann. Sci.
  \'{E}cole Norm. Sup. (3) \textbf{66} (1949), 209--261.

\bibitem{D:book}
\bysame, \emph{Les ${C}^*$-alg\`{e}bres et leurs repr\'{e}sentations. {R}eprint
  of the second (1969) edition}, Les Grands Classiques Gauthier-Villars,
  \'{E}ditions Jacques Gabay, Paris, 1996.

\bibitem{Do:Ultrapowers}
A.~Dow, \emph{On ultrapowers of {B}oolean algebras}, Topology Proc. \textbf{9}
  (1984), no.~2, 269--291.

\bibitem{Dye:On}
H.~A. Dye, \emph{On the geometry of projections in certain operator algebras},
  Ann. of Math. (2) \textbf{61} (1955), 73--89.

\bibitem{Fa:Relative}
I.~Farah, \emph{The relative commutant of separable {C*}-algebras of real rank
  zero}, Jour. Funct. Analysis \textbf{256} (2009), 3841--3846.

\bibitem{Fa:All}
\bysame, \emph{All automorphisms of the {C}alkin algebra are inner}, Annals of
  Mathematics \textbf{173} (2011), 619--661.

\bibitem{FaHaSh:Model2}
I.~Farah, B.~Hart, and D.~Sherman, \emph{Model theory of operator algebras
  {II}: Model theory}, preprint, http://arxiv.org/submit/17963, 2010.

\bibitem{FaPhiSte:Relative}
I.~Farah, N.C. Phillips, and J.~Stepr\=ans, \emph{The commutant of {$L(H)$} in
  its ultrapower may or may not be trivial}, Math. Annalen \textbf{347} (2010),
  839--857.

\bibitem{FaSh:Dichotomy}
I.~Farah and S.~Shelah, \emph{A dichotomy for the number of ultrapowers}, J.
  Math. Logic \textbf{10} (2010), 45--81.

\bibitem{Fr:MT3}
D.H. Fremlin, \emph{Measure theory}, vol.~3, Torres--Fremlin, 2002.

\bibitem{GaRo:Characterizing}
J.~Galindo and M.~R\'odenas, \emph{Characterizing group {C*}-algebras through
  their unitary groups: the abelian case}, Forum Math \textbf{22} (2010),
  811--824.

\bibitem{GeHa}
L.~Ge and D.~Hadwin, \emph{Ultraproducts of {$C\sp *$}-algebras}, Recent
  advances in operator theory and related topics (Szeged, 1999), Oper. Theory
  Adv. Appl., vol. 127, Birkh\"auser, Basel, 2001, pp.~305--326.

\bibitem{Iovino:StableI}
J.~Iovino, \emph{Stable {B}anach spaces and {B}anach space structures. {I}.
  {F}undamentals}, Models, algebras, and proofs ({B}ogot\'a, 1995), Lecture
  Notes in Pure and Appl. Math., vol. 203, Dekker, New York, 1999, pp.~77--95.

\bibitem{Jon:von}
V.F.R. Jones, \emph{Von {N}eumann algebras}, 2003, lecture notes,
  http://math.berkeley.edu/$\sim$vfr/VonNeumann.pdf.

\bibitem{Li:Ultraproducts}
W.~Li, \emph{On ultraproducts of operator algebras}, Sci. China Ser. A
  \textbf{48} (2005), no.~9, 1284--1295.

\bibitem{McDuff:Central}
D.~McDuff, \emph{Central sequences and the hyperfinite factor}, Proc. London
  Math. Soc. \textbf{21} (1970), 443--461.

\bibitem{O:findim}
T.~Ogasawara, \emph{Finite-dimensionality of certain {B}anach algebras}, J.
  Sci. Hiroshima Univ. Ser. A. \textbf{17} (1954), 359--364.

\bibitem{Pe:Hyperlinear}
V.~Pestov, \emph{Hyperlinear and sofic groups: a brief guide}, Bull. Symbolic
  Logic \textbf{14} (2008), 449--480.

\bibitem{Po:Orthogonal}
S.~Popa, \emph{Orthogonal pairs of {$\ast $}-subalgebras in finite von
  {N}eumann algebras}, J. Operator Theory \textbf{9} (1983), no.~2, 253--268.

\bibitem{She:Classification}
S.~Shelah, \emph{Classification theory and the number of nonisomorphic models},
  second ed., Studies in Logic and the Foundations of Mathematics, vol.~92,
  North-Holland Publishing Co., Amsterdam, 1990.

\bibitem{She:Notes}
D.~Sherman, \emph{Notes on automorphisms of ultrapowers of {II}$_1$ factors},
  Studia Math. \textbf{195} (2009), 201--217.

\bibitem{SS}
A.~Sinclair and R.~Smith, \emph{Finite von {N}eumann algebras and masas},
  London Mathematical Society Lecture Note Series (351), Cambridge University
  Press, 2008.

\bibitem{Tak:TheoryIII}
M.~Takesaki, \emph{Theory of operator algebras. {III}}, Encyclopaedia of
  Mathematical Sciences, vol. 127, Springer-Verlag, Berlin, 2003, Operator
  Algebras and Non-commutative Geometry, 8.

\end{thebibliography}
\end{document}